\title{How to split Dyson's transference inequality \\ with the help of \\ Wolfgang Schmidt's parametric geometry of numbers.
                              \thanks{ This research was  supported by
                              RFBR (grant $\textup N^{\circ}$ 09--01--00371a) and
                              by the grant of the President of Russian Federation
                              $\textup N^\circ$ MK--1226.2010.1.
                             }}
\author{Oleg\,N.\,German}
\date{}
\theoremstyle{definition}
\newtheorem{definition}{Definition}
\theoremstyle{remark}
\theoremstyle{plain}
\newtheorem{theorem}{Theorem}
\newtheorem{proposition}{Proposition}
\newtheorem{corollary}{Corollary}
\newtheorem{classic}{Theorem}
\newtheorem{classicprime}[classic]{Theorem}
\DeclareMathOperator{\spanned}{span}
\renewcommand{\vec}[1]{\mathbf{#1}}
\renewcommand{\geq}{\geqslant}
\renewcommand{\leq}{\leqslant}
\renewcommand{\phi}{\varphi}
\newcommand{\R}{\mathbb{R}}
\newcommand{\Z}{\mathbb{Z}}
\newcommand{\La}{\Lambda}
\newcommand{\bpsi}{\underline{\psi}}
\newcommand{\apsi}{\overline{\psi}}
\newcommand{\bPsi}{\underline{\Psi}}
\newcommand{\aPsi}{\overline{\Psi}}
\newcommand{\cL}{\mathcal{L}}
\newcommand{\cB}{\mathcal{B}}
\newcommand{\cJ}{\mathcal{J}}
\newcommand{\gT}{\mathfrak{T}}
\newcommand{\ga}{\mathfrak{a}}
\newcommand{\gb}{\mathfrak{b}}
\newcommand{\tr}[1]{{#1}^\intercal}
\begin{document}

  \maketitle

  \begin{abstract}
    In this paper we develop some of the ideas belonging to W.~Schmidt and L.~Summerer to define intermediate Diophantine exponents and split Dyson's transference inequality into a chain of inequalities for intermediate exponents. This splitting generalizes the analogous result of M.~Laurent and Y.~Bugeaud for Khintchine's transference inequalities.
  \end{abstract}

  \section{Introduction}

  Consider a system of linear equations
  \begin{equation} \label{eq:the_system}
    \Theta\vec x=\vec y
  \end{equation}
  with $\vec x\in\R^m$, $\vec y\in\R^n$ and
  \[ \Theta=
     \begin{pmatrix}
       \theta_{11} & \cdots & \theta_{1m} \\
       \vdots & \ddots & \vdots \\
       \theta_{n1} & \cdots & \theta_{nm}
     \end{pmatrix},\qquad
     \theta_{ij}\in\R. \]
  The classical measure of how well the space of solutions to this system can be approximated by integer points is defined as follows. Let $|\cdot|$ denote the sup-norm in the corresponding space.

  \begin{definition} \label{def:belpha_1}
    The supremum of the real numbers $\gamma$, such that there are arbitrarily large values of $t$ for which (resp. such that for every $t$ large enough) the system of inequalities
    \begin{equation} \label{eq:belpha_1_definition}
      |\vec x|\leq t,\qquad|\Theta\vec x-\vec y|\leq t^{-\gamma}
    \end{equation}
    has a nonzero solution in $(\vec x,\vec y)\in\Z^m\oplus\Z^n$, is called the \emph{regular} (resp. \emph{uniform}) \emph{Diophantine exponent} of $\Theta$ and is denoted by $\beta_1$ (resp. $\alpha_1$).
  \end{definition}

  This paper is a result of the attempt to generalize this concept to the case of the problem of approximating the space of solutions to \eqref{eq:the_system} by $p$-dimensional rational subspaces of $\R^{m+n}$. A large work in this direction was made by W.~Schmidt in \cite{schmidt_annals}. Later, in \cite{laurent_up_down}, \cite{bugeaud_laurent_up_down}, a corresponding definition was given by M.~Laurent and Y.~Bugeaud in the case when $m=1$. With their definition they were able to split the classical Khintchine transference principle into a chain of inequalities for intermediate exponents. However, the way we defined $\alpha_1$ and $\beta_1$ naturally proposes a generalization, which appears to be different from Laurent's:

  \begin{definition} \label{def:belpha_p}
    The supremum of the real numbers $\gamma$, such that there are arbitrarily large values of $t$ for which (resp. such that for every $t$ large enough) the system of inequalities
    \begin{equation} \label{eq:belpha_p_definition}
      |\vec x|\leq t,\qquad|\Theta\vec x-\vec y|\leq t^{-\gamma}
    \end{equation}
    has $p$ solutions $\vec z_i=(\vec x_i,\vec y_i)\in\Z^m\oplus\Z^n$, $i=1,\ldots,p$, linearly independent over $\Z$, is called the \emph{$p$-th regular (resp. uniform) Diophantine exponent of the first type} of $\Theta$ and is denoted by $\beta_p$ (resp. $\alpha_p$).
  \end{definition}

  In Section \ref{sec:laurexp} we propose a definition of intermediate exponents of the second type, which is consistent with Laurent's. In subsequent Sections we show the connection between these two generalizations and some exponents that naturally emerge in Schmidt's parametric geometry of numbers developed in \cite{schmidt_summerer}. Then we discuss the properties of these quantities, generalize some of the observations made in \cite{schmidt_summerer}, and split Dyson's transfer inequality into a chain of inequalities for the intermediate exponents of the second type.

  \section{Laurent's exponents and their generalization} \label{sec:laurexp}

  Set $d=m+n$. Let us denote by $\pmb\ell_1,\ldots,\pmb\ell_d$ the columns of the matrix
  \[ \begin{pmatrix}
       E_m & -\tr\Theta\\
       \Theta & E_n
     \end{pmatrix}, \]
  where $E_m$ and $E_n$ are the corresponding unity matrices and $\tr\Theta$ is the transpose of $\Theta$. Clearly, $\cL=\spanned_\R(\pmb\ell_1,\ldots,\pmb\ell_m)$ is the space of solutions to the system \eqref{eq:the_system}, and $\cL^\bot=\spanned_\R(\pmb\ell_{m+1},\ldots,\pmb\ell_d)$. Denote also by $\vec e_1,\ldots,\vec e_d$ the columns of the $d\times d$ unity matrix $E_d$.

  The following Definition is a slightly modified Laurent's one.

  \begin{definition} \label{def:ba_for_m_equal_to_1}
    Let $m=1$. The supremum of the real numbers $\gamma$, such that there are arbitrarily large values of $t$ for which (resp. such that for every $t$ large enough) the system of inequalities
    \begin{equation} \label{eq:ba_for_m_equal_to_1}
      |\vec Z|\leq t,\qquad|\pmb\ell_1\wedge\vec Z|\leq t^{-\gamma}
    \end{equation}
    has a nonzero solution in $\vec Z\in\wedge^p(\Z^d)$ is called the \emph{$p$-th regular (resp. uniform) Diophantine exponent of the second type} of $\Theta$ and is denoted by $\gb_p$ (resp. $\ga_p$).
  \end{definition}

  Here $\vec Z\in\wedge^p(\R^d)$, $\pmb\ell_1\wedge\vec Z\in\wedge^{p+1}(\R^d)$ and for each $q$ we consider $\wedge^q(\R^d)$ as a $\binom dq$-dimensional Euclidean space with the orthonormal basis consisting of the multivectors
  \[ \vec e_{i_1}\wedge\ldots\wedge\vec e_{i_q},\qquad 1\leq i_1<\ldots<i_q\leq d, \]
  and denote by $|\cdot|$ the sup-norm with respect to this basis.

  Laurent denoted the exponents $\gb_p$, $\ga_p$ as $\omega_{p-1}$, $\hat\omega_{p-1}$, respectively, and showed that for $p=1$ they coincide with $\beta_1$, $\alpha_1$. He also noticed that one does not have to require $\vec Z$ to be decomposable in Definition \ref{def:ba_for_m_equal_to_1}, which essentially simplifies working in $\wedge^p(\R^d)$.

  In order to generalize Definition \ref{def:ba_for_m_equal_to_1} let us set for each $\sigma=\{i_1,\ldots,i_k\}$, $1\leq i_1<\ldots<i_k\leq d$,
  \begin{equation} \label{eq:L_sigma}
    \vec L_\sigma=\pmb\ell_{i_1}\wedge\ldots\wedge\pmb\ell_{i_k},
  \end{equation}
  denote by $\cJ_k$ the set of all the $k$-element subsets of $\{1,\ldots,m\}$, $k=0,\ldots,m$, and set $\vec L_\varnothing=1$.

  Let us also set $k_0=\max(0,m-p)$.

  \begin{definition} \label{def:ba}
    The supremum of the real numbers $\gamma$, such that there are arbitrarily large values of $t$ for which (resp. such that for every $t$ large enough) the system of inequalities
    \begin{equation} \label{eq:ba}
      \max_{\sigma\in\cJ_k}|\vec L_\sigma\wedge\vec Z|\leq t^{1-(k-k_0)(1+\gamma)},\qquad k=0,\ldots,m,
    \end{equation}
    has a nonzero solution in $\vec Z\in\wedge^p(\Z^d)$ is called the \emph{$p$-th regular (resp. uniform) Diophantine exponent of the second type} of $\Theta$ and is denoted by $\gb_p$ (resp. $\ga_p$).
  \end{definition}

  We tended to make this definition look as simple as possible. However, it will be more convenient to work with in the multilinear algebra setting after it is slightly reformulated. To give the desired reformulation let us set for each $\sigma=\{i_1,\ldots,i_k\}$, $1\leq i_1<\ldots<i_k\leq d$,
  \begin{equation} \label{eq:E_sigma}
    \vec E_\sigma=\vec e_{i_1}\wedge\ldots\wedge\vec e_{i_k},
  \end{equation}
  denote by $\cJ'_k$ the set of all the $k$-element subsets of $\{m+1,\ldots,d\}$, $k=0,\ldots,n$, and set $\vec E_\varnothing=1$.

  Set also $k_1=\min(m,d-p)$.

  \begin{proposition} \label{prop:ba_substitution}
    The inequalities \eqref{eq:ba} can be substituted by
    \begin{equation} \label{eq:ba_modified}
      \max_{\begin{subarray}{c} \sigma\in\cJ_k \\ \sigma'\in\cJ'_{d-p-k} \end{subarray}}
      |\vec L_\sigma\wedge\vec E_{\sigma'}\wedge\vec Z|\leq t^{1-(k-k_0)(1+\gamma)},\qquad k=k_0,\ldots,k_1.
    \end{equation}
  \end{proposition}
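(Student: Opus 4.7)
The plan is to prove that the two systems of inequalities are equivalent up to multiplicative constants $C=C(\Theta,d)$, which do not affect the suprema appearing in Definition~\ref{def:ba} and hence leave $\gb_p$ and $\ga_p$ unchanged. Set
\[ M_k(\vec Z)=\max_{\sigma\in\cJ_k}|\vec L_\sigma\wedge\vec Z|,\qquad N_k(\vec Z)=\max_{\begin{subarray}{c}\sigma\in\cJ_k\\\sigma'\in\cJ'_{d-p-k}\end{subarray}}|\vec L_\sigma\wedge\vec E_{\sigma'}\wedge\vec Z|. \]
The identity $|\vec W|=\max_{|\tau|=d-q}|\vec W\wedge\vec E_\tau|$, valid for $\vec W\in\wedge^q(\R^d)$ once a top-degree multivector is identified with its coefficient on $\vec e_1\wedge\cdots\wedge\vec e_d$, applied to $\vec W=\vec L_\sigma\wedge\vec Z$ and restricted to $\tau\subseteq\{m+1,\ldots,d\}$, gives the easy inequality $N_k(\vec Z)\leq M_k(\vec Z)$ for $k\in\{k_0,\ldots,k_1\}$.

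For the reverse estimate, I would take a general $(d-k-p)$-subset and write it as $\tau=A\sqcup B$ with $A\subseteq\{1,\ldots,m\}$, $B\subseteq\{m+1,\ldots,d\}$, and exploit the structural identity $\vec e_j=\pmb\ell_j-\vec w_j$ for $j\leq m$, where $\vec w_j=\sum_{i=1}^{n}\theta_{ij}\vec e_{m+i}$ lies in $\spanned(\vec e_{m+1},\ldots,\vec e_d)$. Expanding $\bigwedge_{j\in A}(\pmb\ell_j-\vec w_j)$ and discarding those summands in which some $\pmb\ell_j$ with $j\in\sigma$ appears (they vanish when wedged with $\vec L_\sigma$) leads to
\[ \vec L_\sigma\wedge\vec E_\tau=\sum_{T\subseteq A\setminus\sigma}\pm\,\vec L_{\sigma\cup T}\wedge\Bigl(\bigwedge_{j\in A\setminus T}\vec w_j\wedge\vec E_B\Bigr). \]
Expanding the parenthesised factor in the basis $\{\vec E_{\sigma'}:\sigma'\in\cJ'_{d-p-k-|T|}\}$ of $\wedge^{d-p-k-|T|}(\spanned(\vec e_{m+1},\ldots,\vec e_d))$ with coefficients polynomial in the entries of $\Theta$ and wedging with $\vec Z$ expresses every summand as a bounded multiple of some $\vec L_{\sigma\cup T}\wedge\vec E_{\sigma'}\wedge\vec Z$ of level $k+|T|\geq k$. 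Taking absolute values and maxima produces
\[ M_k(\vec Z)\leq C\max_{k'\in[\max(k,k_0),\,k_1]}N_{k'}(\vec Z), \]
the endpoints reflecting that $\cJ'_{d-p-k'}=\varnothing$ for $k'<k_0$ (killing the corresponding summands) and that $\vec L_\sigma\wedge\vec Z=0$ once $k+p>d$.

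Finally, the right-hand side of \eqref{eq:ba_modified}, namely $t^{1-(k-k_0)(1+\gamma)}$, is non-increasing in $k$ for every $\gamma\geq-1$, so the maximum above is attained at $k'=\max(k,k_0)$. Assuming \eqref{eq:ba_modified}, this yields $M_k(\vec Z)\leq C\,t^{1-(k-k_0)(1+\gamma)}$ for $k\in\{k_0,\ldots,k_1\}$ and $M_k(\vec Z)\leq Ct\leq t^{1+(k_0-k)(1+\gamma)}$ for $k<k_0$ once $t$ is large; for $k>k_1$ the inequality in \eqref{eq:ba} is trivial. The constant $C$ is absorbed by replacing $\gamma$ with $\gamma-\e$, $\e\to0$, which does not affect $\sup\gamma$ and hence neither $\gb_p$ nor $\ga_p$. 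The main technical obstacle is the combinatorial bookkeeping in the expansion of $\vec L_\sigma\wedge\vec E_\tau$ --- tracking signs, the vanishing of terms containing $\pmb\ell_j$ with $j\in\sigma$, and the spread of the levels $k+|T|\geq k$ --- once this is under control, the passage to exponents is routine.
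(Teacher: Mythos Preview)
Your argument is correct, but the route differs from the paper's. The paper observes that $\pmb\ell_1,\ldots,\pmb\ell_m,\vec e_{m+1},\ldots,\vec e_d$ is a basis of $\R^d$, passes once and for all to the sup-norm $|\cdot|_\Theta$ taken with respect to the induced basis $\{\vec L_\rho\wedge\vec E_{\rho'}\}$ of each $\wedge^q(\R^d)$, and then reads both systems \eqref{eq:ba} and \eqref{eq:ba_modified} directly as constraints on the coordinates $Z_{\rho,\rho'}$ of $\vec Z$ in that basis; the equivalence becomes a short combinatorial check. You instead stay in the standard basis and expand $\vec E_\tau$ through $\vec e_j=\pmb\ell_j-\vec w_j$, which is exactly the change of basis written out term by term. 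Both proofs rest on the same structural fact, but the paper's single invocation of norm equivalence eliminates precisely the ``combinatorial bookkeeping'' you flag as the main obstacle: once in the $\Theta$-basis, there are no signs, no spreading across levels, and no constants to absorb. Your approach has the minor advantage of being self-contained (no appeal to norm equivalence of unspecified constants), at the cost of the explicit expansion.
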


  \begin{proof}
    Since $\pmb\ell_1,\ldots,\pmb\ell_m,\vec e_{m+1},\ldots,\vec e_d$ form a basis of $\R^d$, for each $q=1,\dots,d$ the multivectors
    \[ \vec L_\rho\wedge\vec E_{\rho'},\qquad\rho\in\cJ_j,\ \rho'\in\cJ'_{q-j},\ \max(0,q-n)\leq j\leq\min(q,m), \]
    form a basis of $\wedge^q(\R^d)$. Let us denote by $|\cdot|_\Theta$ the sup-norm in each $\wedge^q(\R^d)$ with respect to such a basis. Since any two norms in a Euclidean space are equivalent, and since in Definition \ref{def:ba} we are concerned only about exponents, we can substitute \eqref{eq:ba} by
    \begin{equation} \label{eq:ba_Thetanized}
      \max_{\sigma\in\cJ_k}|\vec L_\sigma\wedge\vec Z|_\Theta\leq t^{1-(k-k_0)(1+\gamma)},\qquad k=0,\ldots,m,
    \end{equation}
    and \eqref{eq:ba_modified} by
    \begin{equation} \label{eq:ba_modified_Thetanized}
      \max_{\begin{subarray}{c} \sigma\in\cJ_k \\ \sigma'\in\cJ'_{d-p-k} \end{subarray}}
      |\vec L_\sigma\wedge\vec E_{\sigma'}\wedge\vec Z|_\Theta\leq t^{1-(k-k_0)(1+\gamma)},\qquad k=k_0,\ldots,k_1.
    \end{equation}
    Writing
    \[ \vec Z=\sum_{j=\max(0,p-n)}^{\min(p,m)}\sum_{\begin{subarray}{c} \rho\in\cJ_j \\ \rho'\in\cJ'_{p-j} \end{subarray}}
       Z_{\rho,\rho'}\vec L_\rho\wedge\vec E_{\rho'}, \]
    we see that \eqref{eq:ba_Thetanized} for each $k$ means exactly that
    \[ \ Z_{\rho,\rho'}=0,\qquad\qquad\qquad\quad\ \text{ if }\rho\in\cJ_j,\ j>m-k, \]
    \begin{equation} \label{eq:coordinates_filtered}
      |Z_{\rho,\rho'}|\leq t^{1-(k-k_0)(1+\gamma)},\qquad\text{ if }\rho\in\cJ_j,\ j\leq m-k.
    \end{equation}
    Hence we see that all the inequalities in \eqref{eq:ba_Thetanized} with $k>k_1$ are trivial. Next, since we are concerned about large values of $t$, by Minkowski's first convex body theorem we may confine ourselves to considering only positive values of $1+\gamma$. Then the function $t^{1-(k-k_0)(1+\gamma)}$ is non-increasing with respect to $k$, so for each $\rho\in\cJ_j$ of all the inequalities \eqref{eq:coordinates_filtered} we may keep the ones with the largest $k$, i.e. with the one equal to $m-j$. Thus, \eqref{eq:ba_Thetanized} becomes equivalent to
    \begin{equation} \label{eq:coordinates_graduated}
      |Z_{\rho,\rho'}|\leq t^{1-(k-k_0)(1+\gamma)},\qquad\text{ if }\rho\in\cJ_{m-k},\ k_0\leq k\leq k_1.\phantom{,\ \rho'\in\cJ'_{p-m+k}}
    \end{equation}
    On the other hand, \eqref{eq:ba_modified_Thetanized} means that
    \begin{equation} \label{eq:coordinates_sandwiched}
      |Z_{\rho,\rho'}|\leq t^{1-(k-k_0)(1+\gamma)},\qquad\text{ if }\rho\in\cJ_{m-k},\ \rho'\in\cJ'_{p-m+k},\ k_0\leq k\leq k_1,
    \end{equation}
    which is obviously equivalent to \eqref{eq:coordinates_graduated}.
  \end{proof}

  \section{Schmidt's exponents}

  Let $\La$ be a unimodular $d$-dimensional lattice in $\R^d$. Denote by $\cB_\infty^d$ the unit ball in sup-norm, i.e. the cube with vertices at the points $(\pm1,\ldots,\pm1)$. For each vector $\pmb\tau=(\tau_1,\ldots,\tau_d)\in\R^d$ denote by $D_{\pmb\tau}$ the diagonal $d\times d$ matrix with $e^{\tau_1},\ldots,e^{\tau_d}$ on the main diagonal.
  Let us also denote by $\lambda_p(M)$ the $p$-th successive minimum of a compact symmetric convex body $M\subset\R^d$ (centered at the origin) with respect to the lattice $\La$.


  Suppose we have a path $\gT$ in $\R^d$ defined as $\pmb\tau=\pmb\tau(s)$, $s\in\R_+$, such that
  \begin{equation} \label{eq:sum_of_taus_is_zero}
    \tau_1(s)+\ldots+\tau_d(s)=0,\quad\text{ for all }s.
  \end{equation}
  In our further applications to Diophantine approximation we shall confine ourselves to a path that is a ray with the endpoint at the origin and all the functions $\tau_1(s),\ldots,\tau_d(s)$ being linear. However, in this Section, as well as in the next one, all the definitions and statements are given for arbitrary paths and lattices.

  Set $\cB(s)=D_{\pmb\tau(s)}\cB_\infty^d$. Consider the functions
  \[ \psi_p(\La,\gT,s)=\frac{\ln(\lambda_p(\cB(s)))}{s},\qquad p=1,\ldots,d. \]

  \begin{definition} \label{def:schmidt_psi}
    We call the quantities
    \[ \bpsi_p(\La,\gT)=\liminf_{s\to+\infty}\psi_p(\La,\gT,s),\qquad
       \apsi_p(\La,\gT)=\limsup_{s\to+\infty}\psi_p(\La,\gT,s) \]
    \emph{the $p$-th lower} and \emph{upper Schmidt's exponents of the first type}, respectively.
  \end{definition}

  \begin{definition} \label{def:schmidt_Psi}
    We call the quantities
    \[ \bPsi_p(\La,\gT)=\liminf_{s\to+\infty}\bigg(\sum_{i=1}^p\psi_i(\La,\gT,s)\bigg)\,,\qquad
       \aPsi_p(\La,\gT)=\limsup_{s\to+\infty}\bigg(\sum_{i=1}^p\psi_i(\La,\gT,s)\bigg) \]
    \emph{the $p$-th lower} and \emph{upper Schmidt's exponents of the second type}, respectively.
  \end{definition}

  Sometimes, when it is clear from the context what lattice and what path are under consideration, we shall write simply $\psi_p(s)$, $\bpsi_p$, $\apsi_p$, $\bPsi_p$, and $\aPsi_p$.

  The following Proposition and its Corollaries generalize some of the observations made in \cite{schmidt_summerer} and \cite{bugeaud_laurent_up_down}.

  \begin{proposition} \label{prop:mink}
    For any $\La$ and $\gT$ we have
    \begin{equation} \label{eq:mink}
      0\leq-\sum_{i=1}^d\psi_i(s)=O(s^{-1}).
    \end{equation}
    Particularly,
    \begin{equation} \label{eq:mink_0}
      \bPsi_d=\aPsi_d=\lim_{s\to\infty}\sum_{i=1}^d\psi_i(s)=0.
    \end{equation}
  \end{proposition}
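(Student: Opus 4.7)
The plan is to reduce everything to Minkowski's second theorem applied to the family of bodies $\cB(s)$. The key observation is that $\cB(s)=D_{\pmb\tau(s)}\cB_\infty^d$ has volume
\[ \vol(\cB(s))=2^d\prod_{i=1}^d e^{\tau_i(s)}=2^d\exp\!\Big(\sum_{i=1}^d\tau_i(s)\Big)=2^d, \]
by the constraint \eqref{eq:sum_of_taus_is_zero}, so $\vol(\cB(s))$ is independent of $s$.

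I would then invoke Minkowski's second theorem in the form
\[ \frac{2^d}{d!}\leq\lambda_1(\cB(s))\cdots\lambda_d(\cB(s))\cdot\vol(\cB(s))/2^d\leq 1, \]
which, using $\vol(\cB(s))=2^d$ and unimodularity of $\La$, simplifies to
\[ \frac{1}{d!}\leq\prod_{i=1}^d\lambda_i(\cB(s))\leq 1. \]
Taking logarithms and dividing by $s$ gives
\[ -\frac{\ln(d!)}{s}\leq\sum_{i=1}^d\psi_i(s)\leq 0, \]
which is exactly \eqref{eq:mink}. Letting $s\to+\infty$ squeezes $\sum_{i=1}^d\psi_i(s)$ to $0$, so both $\bPsi_d$ and $\aPsi_d$ equal $0$, establishing \eqref{eq:mink_0}.

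There is essentially no obstacle here: the proof is a one-line application of Minkowski's theorem once one notices that the sum-of-$\tau_i$ condition is precisely what makes $\vol(\cB(s))$ constant. The only mildly delicate point is keeping track of constants (the $1/d!$ and the $2^d$), but these only affect the implicit constant in $O(s^{-1})$ and vanish in the limit, so they do not affect the statement.
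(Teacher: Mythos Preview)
Your proof is correct and essentially identical to the paper's: both compute $\vol(\cB(s))=2^d$ from \eqref{eq:sum_of_taus_is_zero}, apply Minkowski's second theorem to get $\frac{1}{d!}\leq\prod_i\lambda_i(\cB(s))\leq 1$, and take logarithms divided by $s$. The only blemish is a constant slip in your displayed form of Minkowski's theorem (the left-hand side should be $1/d!$, not $2^d/d!$, after dividing through by $2^d$), but you write the correct simplified inequality on the next line, so the argument is unaffected.
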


  \begin{proof}
    Due to \eqref{eq:sum_of_taus_is_zero} the volumes of all the parallelepipeds $\cB(s)$ are equal to $2^d$, so by Minkowski's second theorem we have
    \[ \frac{1}{d!}\leq\prod_{i=1}^d\lambda_i(\cB(s))\leq1. \]
    Hence
    \[ -\frac{\ln(d!)}{s}\leq\sum_{i=1}^d\psi_i(s)\leq0, \]
    which immediately implies \eqref{eq:mink}.
  \end{proof}

  \begin{corollary} \label{cor:Psi_inter_dyson}
    For any $\La$ and $\gT$ and any $p$ within the range $1\leq p\leq d-2$ we have
    \begin{equation} \label{eq:Psi_inter_dyson}
      \frac{\bPsi_{p+1}}{d-p-1}\leq\frac{\bPsi_p}{d-p}\qquad\text{ and }\qquad\frac{\aPsi_{p+1}}{d-p-1}\leq\frac{\aPsi_p}{d-p}\,.
    \end{equation}
  \end{corollary}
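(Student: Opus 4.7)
The plan is to leverage the monotonicity $\psi_1(s)\leq\psi_2(s)\leq\ldots\leq\psi_d(s)$, which follows directly from the ordering of successive minima $\lambda_1(\cB(s))\leq\ldots\leq\lambda_d(\cB(s))$, together with the Minkowski-type control $\sum_{i=1}^d\psi_i(s)=O(s^{-1})$ supplied by Proposition \ref{prop:mink}. Both halves of \eqref{eq:Psi_inter_dyson} should then drop out of a single averaging estimate applied to the $d-p$ largest of the $\psi_i(s)$.

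First I would record the averaging bound
\[
\psi_{p+1}(s)\leq\frac{1}{d-p}\sum_{i=p+1}^d\psi_i(s),
\]
which is valid because $\psi_{p+1}(s)$ is the smallest among the $d-p$ quantities $\psi_{p+1}(s),\ldots,\psi_d(s)$. Rewriting the right-hand side as $\frac{1}{d-p}\bigl(\sum_{i=1}^d\psi_i(s)-\sum_{i=1}^p\psi_i(s)\bigr)$ and invoking \eqref{eq:mink}, this inequality becomes
\[
\psi_{p+1}(s)\leq-\frac{1}{d-p}\sum_{i=1}^p\psi_i(s)+O(s^{-1}).
\]

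Next I would add $\sum_{i=1}^p\psi_i(s)$ to both sides, collect terms, and divide through by the positive constant $d-p-1$, obtaining
\[
\frac{1}{d-p-1}\sum_{i=1}^{p+1}\psi_i(s)\leq\frac{1}{d-p}\sum_{i=1}^p\psi_i(s)+O(s^{-1}).
\]
Applying $\liminf_{s\to+\infty}$ to both sides produces the first inequality of \eqref{eq:Psi_inter_dyson}, and applying $\limsup_{s\to+\infty}$ produces the second; the $O(s^{-1})$ term contributes nothing in either limit.

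No step here is a genuine obstacle; the only point that deserves attention is that the signs of $\bPsi_p$ and $\aPsi_p$ are not known a priori, so the manipulation must be kept linear and must not rely on any sign of the quantities being divided. This is automatic in the argument above, because all rearrangements are performed at the level of the sums $\sum\psi_i(s)$, and the division by $d-p-1>0$ is carried out only at the very end, just before passing to the liminf or limsup.
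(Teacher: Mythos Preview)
Your argument is correct and essentially identical to the paper's: both use the monotonicity of the $\psi_i(s)$ to bound $\psi_{p+1}(s)$ by the average of $\psi_{p+1}(s),\ldots,\psi_d(s)$, then invoke Proposition~\ref{prop:mink} and pass to the $\liminf$/$\limsup$. The only cosmetic difference is that the paper uses the sharper half of \eqref{eq:mink}, namely $\sum_{i=1}^d\psi_i(s)\leq 0$, so the $O(s^{-1})$ error term never appears; carrying it as you do is harmless since it vanishes in the limit.
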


  \begin{proof}
    Since $\psi_{p+1}(s)\leq\psi_{p+2}(s)\leq\ldots\leq\psi_d(s)$, it follows from \eqref{eq:mink} that
    \[ \psi_{p+1}(s)\leq\frac{-1}{d-p}\sum_{i=1}^p\psi_i(s), \]
    whence
    \[ \sum_{i=1}^{p+1}\psi_i(s)\leq\left(1-\frac{1}{d-p}\right)\sum_{i=1}^p\psi_i(s). \]
    It remains to take the $\liminf$ and the $\limsup$ of both sides as $s\to\infty$.
  \end{proof}

  Applying consequently \eqref{eq:Psi_inter_dyson} we get the following statement.

  \begin{corollary} \label{cor:Psi_dyson}
    For any $\La$ and $\gT$ we have
    \begin{equation} \label{eq:Psi_dyson}
      (d-1)\bPsi_{d-1}\leq\bPsi_1\qquad\text{ and }\qquad(d-1)\aPsi_{d-1}\leq\aPsi_1.
    \end{equation}
  \end{corollary}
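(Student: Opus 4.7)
The plan is straightforward: since the hypothesis of Corollary \ref{cor:Psi_inter_dyson} holds for every $p$ in the range $1\leq p\leq d-2$, I would simply iterate the inequalities \eqref{eq:Psi_inter_dyson} to form a telescoping chain. Concretely, for the lower exponents I would apply \eqref{eq:Psi_inter_dyson} successively with $p=1,2,\ldots,d-2$, which yields
\[
\frac{\bPsi_{d-1}}{1}\leq\frac{\bPsi_{d-2}}{2}\leq\ldots\leq\frac{\bPsi_2}{d-2}\leq\frac{\bPsi_1}{d-1}\,,
\]
and comparing the leftmost and rightmost terms gives exactly $(d-1)\bPsi_{d-1}\leq\bPsi_1$. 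The very same chain, now for $\aPsi$ instead of $\bPsi$, is provided by the second half of Corollary \ref{cor:Psi_inter_dyson} and yields the corresponding inequality $(d-1)\aPsi_{d-1}\leq\aPsi_1$.

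There is essentially no obstacle: Corollary \ref{cor:Psi_inter_dyson} has already done the real work (namely, combining Minkowski's second theorem in the form \eqref{eq:mink} with the monotonicity $\psi_{p+1}\leq\ldots\leq\psi_d$). The only thing to verify is that the range $1\leq p\leq d-2$ is exactly what is needed to chain the inequalities from $p=1$ up to the term involving $\bPsi_{d-1}$, which it is. One could optionally remark that the argument simultaneously produces the stronger chain of intermediate inequalities displayed above, of which $(d-1)\bPsi_{d-1}\leq\bPsi_1$ and $(d-1)\aPsi_{d-1}\leq\aPsi_1$ are the extreme cases; this is the Schmidt--Summerer analogue of Dyson's transference inequality split into intermediate steps, which motivates the title of the paper.
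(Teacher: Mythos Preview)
Your proposal is correct and is precisely the paper's own argument: the paper introduces Corollary~\ref{cor:Psi_dyson} with the single sentence ``Applying consequently \eqref{eq:Psi_inter_dyson} we get the following statement,'' which is exactly the telescoping chain you wrote out. No further justification is needed.
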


  Another simple corollary to Proposition \ref{prop:mink} is the following statement.

  \begin{corollary} \label{cor:Psi_and_psi}
    For any $\La$ and $\gT$ we have
    \begin{equation} \label{eq:Psi_and_psi}
      \bPsi_{d-1}=-\apsi_d\qquad\text{ and }\qquad\aPsi_{d-1}=-\bpsi_d.
    \end{equation}
  \end{corollary}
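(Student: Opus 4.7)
The plan is extremely short because Proposition \ref{prop:mink} does nearly all the work. By \eqref{eq:mink}, we have
\[ \sum_{i=1}^{d}\psi_i(s) = O(s^{-1}), \]
so in particular this sum tends to $0$ as $s\to+\infty$. Rearranging gives
\[ \sum_{i=1}^{d-1}\psi_i(s) = -\psi_d(s) + O(s^{-1}). \]

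The key observation is that adding a term of size $O(s^{-1})$ (which vanishes in the limit) does not affect either $\liminf$ or $\limsup$ as $s\to+\infty$. Hence
\[ \bPsi_{d-1} = \liminf_{s\to+\infty}\sum_{i=1}^{d-1}\psi_i(s) = \liminf_{s\to+\infty}\bigl(-\psi_d(s)\bigr) = -\limsup_{s\to+\infty}\psi_d(s) = -\apsi_d, \]
and symmetrically
\[ \aPsi_{d-1} = \limsup_{s\to+\infty}\sum_{i=1}^{d-1}\psi_i(s) = \limsup_{s\to+\infty}\bigl(-\psi_d(s)\bigr) = -\liminf_{s\to+\infty}\psi_d(s) = -\bpsi_d, \]
using the standard identities $\liminf(-f) = -\limsup(f)$ and $\limsup(-f) = -\liminf(f)$.

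There is essentially no obstacle: once Proposition \ref{prop:mink} is in hand, the statement reduces to the fact that $\liminf$ and $\limsup$ commute with negation (up to swapping) and are insensitive to vanishing perturbations.
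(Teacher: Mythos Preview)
Your proof is correct and is exactly the argument the paper has in mind: the paper presents this statement as an immediate corollary to Proposition~\ref{prop:mink} without writing out a proof, and your derivation from \eqref{eq:mink} via $\sum_{i=1}^{d-1}\psi_i(s)=-\psi_d(s)+O(s^{-1})$ followed by taking $\liminf$/$\limsup$ is precisely the intended one-line justification.
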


  As we shall see later, the first of the inequalities \eqref{eq:Psi_dyson} generalizes Khintchine's and Dyson's transference inequalities.

  \section{Schmidt's exponents of the second type from the point of view of multilinear algebra}

  As before, let us consider the space $\wedge^p(\R^d)$ as the $\binom dp$-dimensional Euclidean space with the orthonormal basis consisting of the multivectors
  \[ \vec e_{i_1}\wedge\ldots\wedge\vec e_{i_p},\qquad 1\leq i_1<\ldots<i_p\leq d. \]
  Let us order the set of the $p$-element subsets of $\{1,\ldots,d\}$ lexicographically and denote the $j$-th subset by $\sigma_j$. To each vector $\pmb\tau=(\tau_1,\ldots,\tau_d)$ let us associate the vector
  \begin{equation} \label{eq:T_hat}
    \widehat{\pmb\tau}=\Big(\widehat\tau_1,\ldots,\widehat\tau_r\Big),\qquad\widehat\tau_j=\sum_{i\in\sigma_j}\tau_i,\qquad r={\binom dp}.
  \end{equation}
  Thus, a path $\gT:s\to\pmb\tau(s)$ leads us by \eqref{eq:T_hat} to the path $\widehat\gT:s\to\widehat{\pmb\tau}(s)$ also satisfying the condition
  \[ \widehat\tau_1(s)+\ldots+\widehat\tau_r(s)=0,\quad\text{ for all }s. \]
  Finally, given a lattice $\La\subset\R^d$, let us associate to it the lattice $\widehat\La=\wedge^p(\La)$.

  \begin{proposition} \label{prop:Psi_p_is_Psi_1}
    For any $\La$ and $\gT$ we have
    \[ \bPsi_p(\La,\gT)=\bPsi_1(\widehat\La,\widehat\gT)=\bpsi_1(\widehat\La,\widehat\gT)
    \quad\text{ and }\quad
    \aPsi_p(\La,\gT)=\aPsi_1(\widehat\La,\widehat\gT)=\apsi_1(\widehat\La,\widehat\gT). \]
  \end{proposition}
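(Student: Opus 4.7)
The plan is to identify $\psi_1(\widehat\La,\widehat\gT,s)$ with $\sum_{i=1}^{p}\psi_i(\La,\gT,s)$ up to a term that is $O(1/s)$, so that after passing to $\liminf$ and $\limsup$ we obtain the claimed equalities. The equalities $\bPsi_1(\widehat\La,\widehat\gT)=\bpsi_1(\widehat\La,\widehat\gT)$ and $\aPsi_1(\widehat\La,\widehat\gT)=\apsi_1(\widehat\La,\widehat\gT)$ are automatic since $\Psi_1$ is by definition a sum with a single summand equal to $\psi_1$.

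First I would observe that the exterior action is diagonal: since $\wedge^p(D_{\pmb\tau(s)})$ is diagonal in the basis $\vec e_{i_1}\wedge\ldots\wedge\vec e_{i_p}$ of $\wedge^p(\R^d)$ with eigenvalues $e^{\widehat\tau_j(s)}$, one has $\wedge^p(D_{\pmb\tau(s)})=D_{\widehat{\pmb\tau}(s)}$ on $\R^r$. Consequently the $p$-th compound body of $\cB(s)$, namely
\[ \cB(s)^{(p)}=\conv\bigl\{\vec v_1\wedge\ldots\wedge\vec v_p\mid\vec v_i\in\cB(s)\bigr\}, \]
satisfies $\cB(s)^{(p)}=D_{\widehat{\pmb\tau}(s)}(\cB_\infty^d)^{(p)}$. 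The compound of the cube $(\cB_\infty^d)^{(p)}$ lies between two fixed multiples of the cube $\cB_\infty^r$ in $\wedge^p(\R^d)$: on one hand each coordinate of a decomposable vector $\vec v_1\wedge\ldots\wedge\vec v_p$ with $\vec v_i\in\cB_\infty^d$ is a $p\times p$ minor bounded by $p!$; on the other hand the basis multivectors $\vec e_{i_1}\wedge\ldots\wedge\vec e_{i_p}$ themselves belong to $(\cB_\infty^d)^{(p)}$. Hence
\[ c_1\,D_{\widehat{\pmb\tau}(s)}\cB_\infty^r\subset\cB(s)^{(p)}\subset c_2\,D_{\widehat{\pmb\tau}(s)}\cB_\infty^r \]
with constants depending only on $d$ and $p$.

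Next I would invoke Mahler's theorem on successive minima of compound bodies, which asserts that the $j$-th successive minimum of $\cB(s)^{(p)}$ with respect to $\widehat\La=\wedge^p(\La)$ differs by a factor depending only on $d,p$ from the product $\lambda_{i_1}(\cB(s))\cdots\lambda_{i_p}(\cB(s))$, where $\{i_1,\ldots,i_p\}$ is the $j$-th $p$-subset in the chosen order. In particular, for $j=1$,
\[ \lambda_1\bigl(\cB(s)^{(p)},\widehat\La\bigr)\asymp\lambda_1(\cB(s))\cdots\lambda_p(\cB(s)), \]
with implied constants independent of $s$. Combined with the previous comparison of bodies we obtain
\[ \lambda_1\bigl(D_{\widehat{\pmb\tau}(s)}\cB_\infty^r,\widehat\La\bigr)\asymp\lambda_1(\cB(s))\cdots\lambda_p(\cB(s)). \]
Taking logarithms, dividing by $s$ and rewriting,
\[ \psi_1(\widehat\La,\widehat\gT,s)=\sum_{i=1}^{p}\psi_i(\La,\gT,s)+O(s^{-1}). \]
Passing to $\liminf$ and $\limsup$ as $s\to+\infty$ yields $\bpsi_1(\widehat\La,\widehat\gT)=\bPsi_p(\La,\gT)$ and $\apsi_1(\widehat\La,\widehat\gT)=\aPsi_p(\La,\gT)$, which together with the trivial identifications $\bPsi_1=\bpsi_1$ and $\aPsi_1=\apsi_1$ in $(\widehat\La,\widehat\gT)$ completes the argument.

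The only nontrivial step is the appeal to Mahler's theorem on compound bodies; once one grants this, together with the observation that the compound of a cube is sandwiched between two cubes of the appropriate dimension, the rest is a routine bookkeeping exercise. It is worth noting that the proposition holds for \emph{arbitrary} paths and lattices, so no structural assumption on $\gT$ beyond \eqref{eq:sum_of_taus_is_zero} is needed, and the same bookkeeping simultaneously produces both the lower and upper versions.
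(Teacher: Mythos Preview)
Your proof is correct and follows essentially the same route as the paper: identify $D_{\widehat{\pmb\tau}}\cB_\infty^r$ with the $p$-th (pseudo-)compound of $D_{\pmb\tau}\cB_\infty^d$ up to constants, invoke Mahler's theorem on the first minimum of a compound body, and pass to $\liminf$/$\limsup$. You supply a little more detail on the cube comparison and on the trivial identity $\bPsi_1=\bpsi_1$, but the argument is the same.
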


  \begin{proof}
    Let us denote by $\lambda_i(M)$ the $i$-th successive minimum of a body $M$ with respect to $\La$ if $M\subset\R^d$ and with respect to $\widehat\La$ if $M\in\wedge^p(\R^d)$.

    The matrix $D_{\widehat{\pmb\tau}}$ is the $p$-th compound of $D_{\pmb\tau}$:
    \[ D_{\widehat{\pmb\tau}}=D_{\pmb\tau}^{(p)}. \]
    This means that $D_{\widehat{\pmb\tau}}\cB_\infty^r$ is comparable to Mahler's $p$-th compound convex body of $D_{\pmb\tau}\cB_\infty^d$ (see \cite{mahler_compound_I}), i.e. there is a positive constant $c$ depending only on $d$, such that
    \begin{equation} \label{eq:comparable_to_pseudo_compound}
      c^{-1}D_{\widehat{\pmb\tau}}\cB_\infty^r\subset[D_{\pmb\tau}\cB_\infty^d]^{(p)}\subset cD_{\widehat{\pmb\tau}}\cB_\infty^r.
    \end{equation}
    In \cite{schmidt_DA} the set $D_{\widehat{\pmb\tau}}\cB_\infty^r$ is called the $p$-th pseudo-compound parallelepiped for $D_{\pmb\tau}\cB_\infty^d$.

    It follows from Mahler's theory of compound bodies that
    \begin{equation} \label{eq:first_minimum_vs_product}
      \lambda_1\left([D_{\pmb\tau}\cB_\infty^d]^{(p)}\right)\asymp\prod_{i=1}^p\lambda_i\left(D_{\pmb\tau}\cB_\infty^d\right)
    \end{equation}
    with the implied constants depending only on $d$. Combining \eqref{eq:comparable_to_pseudo_compound} and \eqref{eq:first_minimum_vs_product} we get
    \[ \ln\left(\lambda_1\left(D_{\widehat{\pmb\tau}(s)}\cB_\infty^r\right)\right)=
       \sum_{i=1}^p\ln\left(\lambda_i\left(D_{\pmb\tau(s)}\cB_\infty^d\right)\right)+O(1), \]
    whence
    \[ \psi_1(\widehat\La,\widehat\gT,s)=\sum_{i=1}^p\psi_i(\La,\gT,s)+o(1). \]
    It remains to take the $\liminf$ and the $\limsup$ of both sides as $s\to\infty$.
  \end{proof}

  \section{Diophantine exponents in terms of Schmidt's exponents}

  Let $\pmb\ell_1,\ldots,\pmb\ell_d$, $\vec e_1,\ldots,\vec e_d$ be as in Section \ref{sec:laurexp}. Set
  \[ T=
     \begin{pmatrix}
       E_m & 0 \\
       \Theta & E_n
     \end{pmatrix}. \]
  Then
  \[ \tr{(T^{-1})}=
     \begin{pmatrix}
       E_m & -\tr\Theta \\
       0 & E_n
     \end{pmatrix}, \]
  so the bases $\pmb\ell_1,\ldots,\pmb\ell_m,\vec e_{m+1},\ldots,\vec e_d$ and $\vec e_1,\ldots,\vec e_m,\pmb\ell_{m+1},\ldots,\pmb\ell_d$ are dual.

  Let us specify a lattice $\La$ and a path $\gT$ as follows. Set
  \begin{equation} \label{eq:La}
    \La=T^{-1}\Z^d=\Big\{ \Big(\langle\vec e_1,\vec z\rangle,\ldots,\langle\vec e_m,\vec z\rangle,\langle\pmb\ell_{m+1},\vec z\rangle,\ldots,\langle\pmb\ell_d,\vec z\rangle\Big)\in\R^d \,\Big|\, \vec z\in\Z^d \Big\}
  \end{equation}
  and define $\gT:s\mapsto\pmb\tau(s)$ by
  \begin{equation} \label{eq:path}
    \tau_1(s)=\ldots=\tau_m(s)=s,\quad\tau_{m+1}(s)=\ldots=\tau_d(s)=-ms/n.
  \end{equation}
  Schmidt's exponents $\bpsi_p$, $\apsi_p$ corresponding to such $\La$ and $\gT$ and the exponents $\beta_p$, $\alpha_p$ are but two different points of view at the same phenomenon. The same can be said about $\bPsi_p$, $\aPsi_p$ and $\gb_p$, $\ga_p$. It is exposed in the following two Propositions.

  \begin{proposition} \label{prop:belpha_via_psis}
    We have
    \begin{equation} \label{eq:belpha_via_psis}
      (1+\beta_p)(1+\bpsi_p)=(1+\alpha_p)(1+\apsi_p)=d/n.
    \end{equation}
  \end{proposition}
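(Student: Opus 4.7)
The plan is to reinterpret the successive minima $\lambda_p(\cB(s))$ as measuring how well certain integer systems can be simultaneously satisfied, and then identify the resulting Schmidt exponents with the Diophantine exponents via an explicit change of variables.

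I would begin by describing $\La$ concretely: for $\vec z'=(\vec x,\vec y)\in\Z^m\oplus\Z^n$, the corresponding point of $\La=T^{-1}\Z^d$ is $(\vec x,\vec y-\Theta\vec x)$. Thus $\lambda\cB(s)$ contains $p$ linearly independent points of $\La$ if and only if there exist $p$ $\Z$-linearly independent integer pairs $(\vec x_i,\vec y_i)\in\Z^m\oplus\Z^n$ satisfying
\[ |\vec x_i|\leq\lambda e^s,\qquad |\Theta\vec x_i-\vec y_i|\leq\lambda e^{-ms/n}. \]
Writing $\lambda_s=\lambda_p(\cB(s))$, $t_s=\lambda_s e^s$ and $u_s=\lambda_s e^{-ms/n}$, so that $\log t_s=s(1+\psi_p(s))$ and $\log u_s=s(\psi_p(s)-m/n)$, a direct calculation gives
\[ 1+\gamma_s=\frac{1+m/n}{1+\psi_p(s)}=\frac{d/n}{1+\psi_p(s)},\qquad\text{where }\gamma_s=-\log u_s/\log t_s, \]
so the identity $(1+\gamma_s)(1+\psi_p(s))=d/n$ holds pointwise along the parameterization $s\mapsto(t_s,u_s)$.

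It remains to match these pointwise exponents with $\beta_p$ and $\alpha_p$. Definition \ref{def:belpha_p} is equivalent to $\beta_p=\limsup_{t\to\infty}\gamma_*(t)$ and $\alpha_p=\liminf_{t\to\infty}\gamma_*(t)$, where $\gamma_*(t)=-\log\mu_p(t)/\log t$ and $\mu_p(t)$ is the infimum of $u$ for which the system $|\vec x|\leq t,\ |\Theta\vec x-\vec y|\leq u$ has $p$ $\Z$-linearly independent integer solutions. The set of pairs $(t,u)$ for which this system has $p$ $\Z$-LI solutions is closed and up-right closed in $\R_{>0}^2$; both $s\mapsto(t_s,u_s)$ and $t\mapsto(t,\mu_p(t))$ trace out its staircase boundary, and although they may disagree on horizontal or vertical segments, they both visit the same sequence of corner points, where the extrema of $-\log u/\log t$ are attained. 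Hence $\limsup_{s\to\infty}\gamma_s=\beta_p$ and $\liminf_{s\to\infty}\gamma_s=\alpha_p$. Taking the reciprocal in the pointwise identity (noting that $1+\psi_p(s)>0$ for large $s$, since $\lambda_1(\cB(s))\geq e^{-s}$ eventually) interchanges $\limsup$ and $\liminf$ and gives $(1+\beta_p)(1+\bpsi_p)=(1+\alpha_p)(1+\apsi_p)=d/n$.

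I expect the main delicate step to be the matching of the extrema of $\gamma_s$ over $s$ with those of $\gamma_*(t)$ over $t$: the slope-$1$ and vertical parameterizations of the feasible boundary can land on different interior points of a flat segment, but they necessarily visit the same corner points of the staircase, and these are exactly where the extremes of $-\log u/\log t$ are attained. The remainder of the argument is elementary algebra.
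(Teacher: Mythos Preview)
Your argument is correct and follows essentially the same route as the paper's: both establish the pointwise identity $(1+\gamma)(1+\psi_p(s))=d/n$ by recognizing that the Diophantine parallelepiped and the Schmidt parallelepiped differ only by a homothety and a reparameterization. The only real difference is in how the matching of extrema is justified. The paper writes down the explicit rescaling $P_\gamma(t)=t^{(m-n\gamma)/d}P_{m/n}(t^{(n+n\gamma)/d})$, which turns $\lambda_p(P_\gamma(t))=1$ directly into the pointwise identity with $t'=t^{(n+n\gamma)/d}$, and then simply takes $\limsup$ and $\liminf$ over $t$; you instead argue geometrically that the two parameterizations $s\mapsto(t_s,u_s)$ and $t\mapsto(t,\mu_p(t))$ trace the same staircase boundary and hence share corner values of $-\log u/\log t$. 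Your version is slightly more explicit about why the change of variable $t\leftrightarrow s$ is legitimate (the paper sweeps this under the rug), at the cost of the staircase picture needing a little care: the $t$-parameterization only approaches the ``outer'' corners as one-sided limits rather than visiting them, but since we are taking $\liminf$ and $\limsup$ this is harmless.
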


  \begin{proof}
    The parallelepiped in $\R^d$ defined by \eqref{eq:belpha_1_definition} can be written as
    \begin{equation*}
      M_\gamma(t)=\Big\{ \vec z\in\R^d \,\Big|\,
      \max_{1\leq j\leq m}|\langle\vec e_j,\vec z\rangle|\leq t,\
      \max_{1\leq i\leq n}|\langle\pmb\ell_{m+i},\vec z\rangle|\leq t^{-\gamma}
        \Big\},
    \end{equation*}
    where $\langle\,\cdot\,,\cdot\,\rangle$ denotes the inner product in $\R^d$.

    Therefore, $\beta_p$ (resp. $\alpha_p$) equals the supremum of the real numbers $\gamma$, such that there are arbitrarily large values of $t$ for which (resp. such that for every $t$ large enough) the parallelepiped $M_\gamma(t)$ contains $p$ linearly independent integer points.

    Hence, considering the parallelepipeds
    \begin{equation} \label{eq:P_gamma}
      P_\gamma(t)=T^{-1}M_\gamma(t)=\Big\{ \vec z\in\R^d \,\Big|\,
      \max_{1\leq j\leq m}|\langle\vec e_j,\vec z\rangle|\leq t,\
      \max_{1\leq i\leq n}|\langle\vec e_{m+i},\vec z\rangle|\leq t^{-\gamma}
      \Big\},
    \end{equation}
    we see that
    \begin{equation} \label{eq:belpha_p_via_parallelepipeds}
      \beta_p=\limsup_{t\to+\infty}\big\{ \gamma\in\R \,\big|\, \lambda_p(P_\gamma(t))=1 \big\}\,,\qquad\alpha_p=\liminf_{t\to+\infty}\big\{ \gamma\in\R \,\big|\, \lambda_p(P_\gamma(t))=1 \big\},
    \end{equation}
    where $\lambda_p(P_\gamma(t))$ is the $p$-th minimum of $P_\gamma(t)$ with respect to $\La$.

    But $P_{m/n}(t)=D_{\pmb\tau(\ln t)}\cB_\infty^d$, so
    \begin{equation} \label{eq:psis_via_parallelepipeds}
      \bpsi_p(\La,\gT)=\liminf_{t\to+\infty}\frac{\ln(\lambda_p(P_{m/n}(t)))}{\ln t}\,,\qquad\apsi_p(\La,\gT)=\limsup_{t\to+\infty}\frac{\ln(\lambda_p(P_{m/n}(t)))}{\ln t}\,.
    \end{equation}

    A simple calculation shows that
    \[ P_\gamma(t)=t^{\frac{m-n\gamma}{d}}P_{m/n}\big(t^{\frac{n+n\gamma}{d}}\big), \]
    i.e.
    \[ \lambda_p(P_\gamma(t))=(t')^{\frac{-m+n\gamma}{n+n\gamma}}\lambda_p\big(P_{m/n}(t')\big) \]
    with $t'=t^{\frac{n+n\gamma}{d}}$. Therefore, the equality
    \[ \lambda_p(P_\gamma(t))=1 \]
    holds if and only if
    \[ 1-\frac{d}{n+n\gamma}+\frac{\ln(\lambda_p(P_{m/n}(t')))}{\ln t'}=0. \]
    Hence, in view of \eqref{eq:belpha_p_via_parallelepipeds}, \eqref{eq:psis_via_parallelepipeds}, we get
    \[ \beta_p=\limsup_{t\to+\infty}\left\{ \frac dn\left( 1+\frac{\ln(\lambda_p(P_{m/n}(t)))}{\ln t} \right)^{-1}-1 \right\}=\frac dn\left(1+\bpsi_p\right)^{-1}-1 \]
    and
    \[ \alpha_p=\liminf_{t\to+\infty}\left\{ \frac dn\left( 1+\frac{\ln(\lambda_p(P_{m/n}(t)))}{\ln t} \right)^{-1}-1 \right\}=\frac dn\left(1+\apsi_p\right)^{-1}-1, \]
    which immediately implies \eqref{eq:belpha_via_psis}.
  \end{proof}

  \begin{proposition} \label{prop:ba_via_Psis}
    Set $\varkappa=\min(p,\frac mn(d-p))$. Then
    \begin{equation} \label{eq:ba_via_Psis}
      (1+\gb_p)(\varkappa+\bPsi_p)=(1+\ga_p)(\varkappa+\aPsi_p)=d/n.
    \end{equation}
  \end{proposition}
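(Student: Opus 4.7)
The plan is to mirror the proof of Proposition \ref{prop:belpha_via_psis}, but in $\wedge^p(\R^d)$ equipped with the compound lattice $\widehat\La=\wedge^p(\La)$ and the compound path $\widehat\gT$. The reformulated Diophantine condition of Proposition \ref{prop:ba_substitution} recasts $\gb_p$ and $\ga_p$ as a first-minimum problem for a suitable parallelepiped, and Proposition \ref{prop:Psi_p_is_Psi_1} identifies $\bPsi_p$ and $\aPsi_p$ with $\bpsi_1(\widehat\La,\widehat\gT)$ and $\apsi_1(\widehat\La,\widehat\gT)$, which is the form in which they will naturally appear.

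First I would observe that the linear map $\wedge^p(T^{-1})$ sends each $\vec L_\rho\wedge\vec E_{\rho'}$ to $\vec E_{\rho\cup\rho'}$ (for $\rho\subset\{1,\ldots,m\}$ and $\rho'\subset\{m+1,\ldots,d\}$), hence sends $\wedge^p(\Z^d)$ bijectively onto $\widehat\La$. Under this identification \eqref{eq:ba_modified} becomes the membership condition
\[ \wedge^p(T^{-1})\vec Z\in\widehat P_\gamma(t):=\Big\{\vec W\in\wedge^p(\R^d)\,\Big|\,|W_\sigma|\leq t^{1-(q-j(\sigma))(1+\gamma)}\ \text{for every $p$-subset }\sigma\subset\{1,\ldots,d\}\Big\}, \]
where $q=\min(m,p)$ and $j(\sigma)=|\sigma\cap\{1,\ldots,m\}|$. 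Consequently
\[ \gb_p=\limsup_{t\to+\infty}\big\{\gamma\in\R\,\big|\,\lambda_1(\widehat P_\gamma(t))=1\big\},\qquad\ga_p=\liminf_{t\to+\infty}\big\{\gamma\in\R\,\big|\,\lambda_1(\widehat P_\gamma(t))=1\big\}, \]
with $\lambda_1$ taken with respect to $\widehat\La$.

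Next I would compare $\widehat P_{m/n}(t)$ with the natural path parallelepiped $\widehat\cB(t)=D_{\widehat{\pmb\tau}(\ln t)}\cB_\infty^{\binom dp}$. Since $\widehat\tau_\sigma(\ln t)=\ln t\cdot(dj(\sigma)-mp)/n$, a direct comparison of bounds gives the uniform dilation
\[ \widehat P_{m/n}(t)=t^{1-\varkappa}\widehat\cB(t),\qquad\varkappa=\frac{qd-mp}{n}, \]
the key fact being that the ratio of the two bounds in the $\vec E_\sigma$-direction is $j(\sigma)$-independent. A short case analysis ($p\leq m$ versus $p>m$) confirms that $(qd-mp)/n=\min(p,\tfrac mn(d-p))$, matching the $\varkappa$ of the statement. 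The same elementary scaling computation as in Proposition \ref{prop:belpha_via_psis} also yields $\widehat P_\gamma(t)=t^{(m-n\gamma)/d}\widehat P_{m/n}(t^{n(1+\gamma)/d})$.

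Combining these two relations, the equation $\lambda_1(\widehat P_\gamma(t))=1$ transforms into
\[ (1+\gamma)\left(\varkappa+\frac{\ln\lambda_1(\widehat\cB(t'))}{\ln t'}\right)=\frac{d}{n},\qquad t'=t^{n(1+\gamma)/d}, \]
which is exactly analogous to the scalar equation obtained in the proof of Proposition \ref{prop:belpha_via_psis}. Since $1+\gamma$ is a strictly decreasing function of the bracketed quotient, $\limsup$ in $t$ (which defines $\gb_p$) corresponds to $\liminf$ of that quotient, and vice versa; by Proposition \ref{prop:Psi_p_is_Psi_1} these one-sided limits equal $\bPsi_p(\La,\gT)$ and $\aPsi_p(\La,\gT)$, respectively, which gives \eqref{eq:ba_via_Psis}. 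The chief obstacle is the preliminary bookkeeping that pins down $\widehat P_{m/n}(t)$ as a uniform $t^{1-\varkappa}$-dilation of $\widehat\cB(t)$ and verifies the identity $\varkappa=(qd-mp)/n=\min(p,\tfrac mn(d-p))$; once these identifications are in place, everything downstream is a practically verbatim transcription of the hyperbolic change of variable used for Proposition \ref{prop:belpha_via_psis}.
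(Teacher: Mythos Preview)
Your proposal is correct and follows essentially the same route as the paper: transform the condition \eqref{eq:ba_modified} under $\wedge^p(T^{-1})$ into a first-minimum problem for a parallelepiped $\widehat P_\gamma(t)$ with respect to $\widehat\La$, identify this parallelepiped with (a dilate of) $D_{\widehat{\pmb\tau}}\cB_\infty^r$, run the scaling argument of Proposition~\ref{prop:belpha_via_psis}, and finish with Proposition~\ref{prop:Psi_p_is_Psi_1}. The only cosmetic difference is your choice of pivot value: you pass through $\gamma=m/n$ and the dilation $\widehat P_{m/n}(t)=t^{1-\varkappa}\widehat\cB(t)$, whereas the paper uses $\gamma_0=d/(n\varkappa)-1$, the value for which $\widehat P_{\gamma_0}(e^{\varkappa s})$ already \emph{equals} $D_{\widehat{\pmb\tau}(s)}\cB_\infty^r$ without any extra dilation; both lead to the same scalar equation and the same conclusion.
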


  \begin{proof}
    Let $\vec L_\sigma$, $\vec E_\sigma$, $\cJ_k$, $\cJ'_k$ be as in Section \ref{sec:laurexp}.

    Since $T^{-1}\pmb\ell_i=\vec e_i$ and $T^{-1}\vec e_j=\vec e_j$, if $1\leq i\leq m$ and $m+1\leq j\leq d$, we have
    \begin{equation} \label{eq:T_deapplied_to_LE}
      (T^{-1})^{(k+k')}(\vec L_\sigma\wedge\vec E_{\sigma'})=\vec E_\sigma\wedge\vec E_{\sigma'},\qquad\text{ for each }
      \sigma\in\cJ_k,\ \sigma'\in\cJ'_{k'},
    \end{equation}
    where $(T^{-1})^{(k+k')}$ is the $(k+k')$-th compound of $T^{-1}$. Furthermore, since $\La=T^{-1}\Z^d$, we have
    \begin{equation} \label{eq:T_and_La}
      \widehat\La=\wedge^p(\La)=(T^{-1})^{(p)}(\wedge^p(\Z^d)).
    \end{equation}
    Hence for each $\vec Z\in\wedge^p(\Z^d)$ and each $\sigma\in\cJ_k$, $\sigma'\in\cJ'_{d-p-k}$ (with $k$ satisfying $k_0\leq k\leq k_1$) we get
    \begin{equation} \label{eq:T_deapplied_to_LEZ}
      |\vec L_\sigma\wedge\vec E_{\sigma'}\wedge\vec Z|=
      |(T^{-1})^{(d-p)}(\vec L_\sigma\wedge\vec E_{\sigma'})\wedge(T^{-1})^{(p)}\vec Z|=
      |\vec E_\sigma\wedge\vec E_{\sigma'}\wedge\vec Z'|,
    \end{equation}
    where $\vec Z'\in\widehat\La$. Here, besides \eqref{eq:T_deapplied_to_LE}, \eqref{eq:T_and_La}, we have made use of the fact that for every $\vec V\in\wedge^p(\R^d)$, $\vec W\in\wedge^{d-p}(\R^d)$ the wedge product $\vec V\wedge\vec W$ is a real number and
    \[ |\vec V\wedge\vec W|=|T^{(p)}\vec V\wedge T^{(d-p)}\vec W|, \]
    provided $\det T=1$.

    Taking into account that any two norms in a Euclidean space are equivalent, we conclude from \eqref{eq:T_deapplied_to_LEZ} and Proposition \ref{prop:ba_substitution} that $\gb_p$ (resp. $\ga_p$) equals the supremum of the real numbers $\gamma$, such that there are arbitrarily large values of $t$ for which (resp. such that for every $t$ large enough) the system of inequalities
    \begin{equation} \label{eq:ba_modified_with_T_applied}
      \max_{\begin{subarray}{c} \sigma\in\cJ_k \\ \sigma'\in\cJ'_{d-p-k} \end{subarray}}
      |\vec E_\sigma\wedge\vec E_{\sigma'}\wedge\vec Z|\leq t^{1-(k-k_0)(1+\gamma)},\qquad k=k_0,\ldots,k_1,
    \end{equation}
    has a nonzero solution in $\vec Z\in\widehat\La$.

    The inequalities \eqref{eq:ba_modified_with_T_applied} define the parallelepiped
    \begin{equation} \label{eq:P_gamma_hat}
      \widehat P_\gamma(t)=\Big\{ \vec Z\in\wedge^p(\R^d) \,\Big|\,
      \max_{\begin{subarray}{c} \sigma\in\cJ_{m-k} \\ \sigma'\in\cJ'_{p-m+k} \end{subarray}}
      |\langle\vec E_\sigma\wedge\vec E_{\sigma'},\vec Z\rangle|\leq t^{1-(k-k_0)(1+\gamma)},\
      k=k_0,\ldots,k_1 \Big\}.
    \end{equation}
    By analogy with \eqref{eq:belpha_p_via_parallelepipeds} we can write
    \begin{equation} \label{eq:ba_via_parallelepipeds}
      \gb_p=\limsup_{t\to+\infty}\Big\{ \gamma\in\R \ \Big|\, \lambda_1\big(\widehat P_\gamma(t)\big)=1 \Big\}\,,\qquad
      \ga_p=\liminf_{t\to+\infty}\Big\{ \gamma\in\R \ \Big|\, \lambda_1\big(\widehat P_\gamma(t)\big)=1 \Big\},
    \end{equation}
    where $\lambda_1\big(\widehat P_\gamma(t)\big)$ is the first minimum of $\widehat P_\gamma(t)$ with respect to $\widehat\La$.

    Consider the path $\widehat\gT$ defined by \eqref{eq:T_hat} for $\gT$. Then
    \[ \widehat\tau_j(s)=\sum_{i\in\sigma_j}\tau_i(s), \]
    and if $\sigma_j\cap\{1,\ldots,m\}\in\cJ_{m-k}$\,, we have
    \[ \widehat\tau_j(s)=(m-k)s-\frac{(p-(m-k))m}{n}s=\left(\frac dn(k_0-k)+\varkappa\right)s=(1-(k-k_0)(1+\gamma_0))\ln t, \]
    where
    \[ t=e^{\varkappa s},\qquad\gamma_0=\frac{d}{n\varkappa}-1. \]
    Hence
    \[ \widehat P_{\gamma_0}(t)=D_{\widehat{\pmb\tau}(s)}\cB_\infty^r, \]
    where, as before, $r=\binom dp$.

    Thus, similar to \eqref{eq:psis_via_parallelepipeds}, we get
    \begin{equation} \label{eq:hat_psis_via_parallelepipeds}
      \bpsi_1(\widehat\La,\widehat\gT)=\liminf_{t\to+\infty}\frac{\varkappa\ln(\lambda_1(\widehat P_{\gamma_0}(t)))}{\ln t}\,,\qquad
      \apsi_1(\widehat\La,\widehat\gT)=\limsup_{t\to+\infty}\frac{\varkappa\ln(\lambda_1(\widehat P_{\gamma_0}(t)))}{\ln t}\,.
    \end{equation}

    The rest of the argument is very much the same as the corresponding part of the proof of Proposition \ref{prop:belpha_via_psis}. Let us observe that
    \[ \widehat P_\gamma(t)=t^{1-\frac{1+\gamma}{1+\gamma_0}}\widehat P_{\gamma_0}\big(t^{\frac{1+\gamma}{1+\gamma_0}}\big). \]
    This implies that
    \[ \lambda_1\big(\widehat P_\gamma(t)\big)=(t')^{1-\frac{1+\gamma_0}{1+\gamma}}\lambda_1\big(\widehat P_{\gamma_0}(t')\big) \]
    with $t'=t^{\frac{1+\gamma}{1+\gamma_0}}$. Therefore, the equality
    \[ \lambda_1\big(\widehat P_\gamma(t)\big)=1 \]
    holds if and only if
    \[ 1-\frac{1+\gamma_0}{1+\gamma}+\frac{\ln(\lambda_1(\widehat P_{\gamma_0}(t')))}{\ln t'}=0. \]
    Hence, in view of \eqref{eq:ba_via_parallelepipeds}, \eqref{eq:hat_psis_via_parallelepipeds}, we get
    \[ \gb_p=\limsup_{t\to+\infty}\left\{ (1+\gamma_0)\left( 1+\frac{\ln(\lambda_1(\widehat P_{\gamma_0}(t)))}{\ln t} \right)^{-1}-1 \right\}=
       (1+\gamma_0)\left(1+\varkappa^{-1}\bpsi_1(\widehat\La,\widehat\gT)\right)^{-1}-1 \]
    and
    \[ \ga_p=\liminf_{t\to+\infty}\left\{ (1+\gamma_0)\left( 1+\frac{\ln(\lambda_1(\widehat P_{\gamma_0}(t)))}{\ln t} \right)^{-1}-1 \right\}=
       (1+\gamma_0)\left(1+\varkappa^{-1}\apsi_1(\widehat\La,\widehat\gT)\right)^{-1}-1. \]
    Thus,
    \[ (1+\gb_p)(\varkappa+\bpsi_1(\widehat\La,\widehat\gT))=(1+\ga_p)(\varkappa+\apsi_1(\widehat\La,\widehat\gT))=d/n. \]
    It remains to apply Proposition \ref{prop:Psi_p_is_Psi_1}.
  \end{proof}

  \section{Transposed system}

  The subspace spanned by $\pmb\ell_{m+1},\ldots,\pmb\ell_d$ is the space of solutions to the system
  \[ -\tr\Theta\vec y=\vec x. \]
  As we noticed in Section \ref{sec:laurexp}, it coincides with the orthogonal complement $\cL^\bot$ for $\cL$. Denote by $\beta_p^\ast$, $\alpha_p^\ast$, $\gb_p^\ast$, $\ga_p^\ast$ the corresponding $p$-th regular and uniform Diophantine exponents of the first and of the second types for the matrix $\tr\Theta$. Obviously, they coincide with the ones corresponding to $-\tr\Theta$. The lattice constructed for $-\tr\Theta$ the very same way $\La$ was constructed for $\Theta$, would be
  \[ \begin{pmatrix}
       E_n & 0 \\
       \tr\Theta & E_m
     \end{pmatrix}\Z^d. \]
  But transposing the first $n$ and the last $m$ coordinates turns this lattice into
  \[ \begin{pmatrix}
       E_m & \tr\Theta \\
       0 & E_n
     \end{pmatrix}\Z^d=\tr T\Z^d=\La^\ast, \]
  which is the lattice, dual for $\La$. For this reason with $\tr\Theta$ we shall associate $\La^\ast$. Now, the most natural way to specify the path determining Schmidt's exponents associated to $\tr\Theta$ is to take into account the coordinates permutation just mentioned and consider the path $\gT^\ast:s\to\pmb\tau^\ast(s)$ defined by
  \begin{equation} \label{eq:path_ast}
    \tau^\ast_1(s)=\ldots=\tau^\ast_m(s)=-ns/m,\quad\tau^\ast_{m+1}(s)=\ldots=\tau^\ast_d(s)=s.
  \end{equation}
  Denoting
  \[ \bpsi_p^\ast=\bpsi_p(\La^\ast,\gT^\ast),\quad \apsi_p^\ast=\apsi_p(\La^\ast,\gT^\ast), \]
  \[ \bPsi_p^\ast=\bPsi_p(\La^\ast,\gT^\ast),\quad \aPsi_p^\ast=\aPsi_p(\La^\ast,\gT^\ast), \]
  we see that any statement proved for an arbitrary $\Theta$ concerning the quantities $\beta_p$, $\alpha_p$, $\bpsi_p$, $\apsi_p$, $\bPsi_p$, $\aPsi_p$ remains valid if $\Theta$ is substituted by $\tr\Theta$, and the quantities $n$, $m$, $\beta_p$, $\alpha_p$, $\bpsi_p$, $\apsi_p$ are substituted by $m$, $n$, $\beta_p^\ast$, $\alpha_p^\ast$, $\bpsi_p^\ast$, $\apsi_p^\ast$, $\bPsi_p^\ast$, $\aPsi_p^\ast$, respectively. Particularly, the analogues of Propositions \ref{prop:belpha_via_psis}, \ref{prop:ba_via_Psis} hold:

  \begin{proposition} \label{prop:starred_belpha_via_starred_psis}
    We have
    \begin{equation} \label{eq:starred_belpha_via_starred_psis}
      (1+\beta_p^\ast)(1+\bpsi_p^\ast)=(1+\alpha_p^\ast)(1+\apsi_p^\ast)=d/m.
    \end{equation}
  \end{proposition}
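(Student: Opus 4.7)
The plan is to invoke Proposition \ref{prop:belpha_via_psis} with $\Theta$ replaced by $-\tr\Theta$ (which, as noted at the start of Section 6, produces the same Diophantine exponents as $\tr\Theta$) and to verify that the lattice and path the recipe of Section 5 attaches to $-\tr\Theta$ are, up to an isometric block transposition of coordinates, the pair $(\La^\ast,\gT^\ast)$ already introduced in this section. The discussion preceding the statement has essentially carried out this identification, so the work left is bookkeeping: tracking the swap between $m$ and $n$.

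Concretely, applying \eqref{eq:La}--\eqref{eq:path} to $-\tr\Theta$ (which has $n$ columns and $m$ rows) produces the lattice $\begin{pmatrix}E_n & 0 \\ \tr\Theta & E_m\end{pmatrix}\Z^d$ together with the path whose first $n$ coordinates equal $s$ and whose last $m$ coordinates equal $-(n/m)s$. Interchanging the first $n$ coordinates with the last $m$ converts this lattice into $\tr T\Z^d=\La^\ast$ and this path into $\pmb\tau^\ast(s)$ of \eqref{eq:path_ast}. Such a block transposition is a signed permutation of the coordinates, hence an isometry of $(\R^d,|\cdot|_\infty)$; it preserves covolumes and successive minima and it intertwines the two diagonal dilation families that define $\psi_p$. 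Consequently the Schmidt exponents produced by Definition \ref{def:schmidt_psi} for $-\tr\Theta$ coincide with $\bpsi_p^\ast$ and $\apsi_p^\ast$.

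With this identification the conclusion \eqref{eq:belpha_via_psis} of Proposition \ref{prop:belpha_via_psis}, rewritten for $-\tr\Theta$ under the interchange $m\leftrightarrow n$, gives immediately $(1+\beta_p^\ast)(1+\bpsi_p^\ast)=(1+\alpha_p^\ast)(1+\apsi_p^\ast)=d/m$, which is \eqref{eq:starred_belpha_via_starred_psis}. There is no substantial obstacle here: the only thing to keep track of is to interchange $m$ and $n$ consistently in the scaling relation $P_\gamma(t)=t^{(m-n\gamma)/d}P_{m/n}(t^{(n+n\gamma)/d})$ that powered the proof of Proposition \ref{prop:belpha_via_psis}, so that the ratio appearing in the final formula becomes $d/m$ rather than $d/n$, and to verify once that the coordinate permutation genuinely is a lattice-preserving sup-norm isometry, which is transparent.
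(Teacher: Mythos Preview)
Your argument is correct and is precisely the one the paper intends: the paper does not give a separate proof but simply observes, in the paragraph preceding the statement, that any result proved for $\Theta$ remains valid when $\Theta$ is replaced by $\tr\Theta$ with $m$ and $n$ interchanged, so that Proposition~\ref{prop:belpha_via_psis} yields \eqref{eq:starred_belpha_via_starred_psis} directly. Your explicit verification that the coordinate block-transposition is a sup-norm isometry carrying the lattice and path for $-\tr\Theta$ to $(\La^\ast,\gT^\ast)$ just spells out what the paper leaves implicit.
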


  \begin{proposition} \label{prop:starred_ba_via_starred_Psis}
    Set $\varkappa^\ast=\min(p,\frac nm(d-p))$. Then
    \begin{equation} \label{eq:starred_ba_via_starred_Psis}
      (1+\gb_p^\ast)(\varkappa^\ast+\bPsi_p^\ast)=(1+\ga_p^\ast)(\varkappa^\ast+\aPsi_p^\ast)=d/m.
    \end{equation}
  \end{proposition}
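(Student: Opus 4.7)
The plan is to deduce the statement by invoking Proposition~\ref{prop:ba_via_Psis} for the matrix $\tr\Theta$ (equivalently $-\tr\Theta$) and translating the result through the coordinate-permutation identification recorded in the paragraph just preceding the proposition. In outline: apply Proposition~\ref{prop:ba_via_Psis} verbatim to $-\tr\Theta$ (with the roles of $m$ and $n$ exchanged), then verify that the lattice and path supplied by that construction coincide, after the standard coordinate swap, with $(\La^\ast,\gT^\ast)$.

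First, I would apply Proposition~\ref{prop:ba_via_Psis} with $\Theta$ replaced by the $n\times m$ matrix $-\tr\Theta$. Under this substitution the integers $m$ and $n$ trade roles, so the constant $\min(p,\tfrac{m}{n}(d-p))$ becomes $\min(p,\tfrac{n}{m}(d-p))=\varkappa^\ast$, and the right-hand side $d/n$ becomes $d/m$. The exponents produced on the left are $\gb_p^\ast$ and $\ga_p^\ast$, since by the definition given in the present section these are the second-type Diophantine exponents attached to $\tr\Theta$, which coincide with those of $-\tr\Theta$.

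Second, I would identify the Schmidt exponents appearing in this application with $\bPsi_p^\ast$ and $\aPsi_p^\ast$. The construction of Section~5 applied to $-\tr\Theta$ produces the lattice
\[ \La'=\begin{pmatrix} E_n & 0 \\ \tr\Theta & E_m \end{pmatrix}\Z^d \]
together with the path $\gT'$ given by $\tau'_1(s)=\ldots=\tau'_n(s)=s$ and $\tau'_{n+1}(s)=\ldots=\tau'_d(s)=-ns/m$. Let $\pi\colon\R^d\to\R^d$ be the permutation that exchanges the first $n$ coordinates with the last $m$. The preceding paragraph records exactly that $\pi(\La')=\La^\ast$, and reading off \eqref{eq:path_ast} yields $\pi(\gT')=\gT^\ast$.

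Third, since $\cB_\infty^d$ is invariant under every coordinate permutation, the body $D_{\pmb\tau'(s)}\cB_\infty^d$ has the same successive minima with respect to $\La'$ as $D_{\pmb\tau^\ast(s)}\cB_\infty^d$ has with respect to $\La^\ast$. Hence $\psi_i(\La',\gT',s)\equiv\psi_i(\La^\ast,\gT^\ast,s)$ for all $i$ and $s$, and in particular $\bPsi_p(\La',\gT')=\bPsi_p^\ast$ and $\aPsi_p(\La',\gT')=\aPsi_p^\ast$. Substituting these identifications into the equality produced in the first step yields \eqref{eq:starred_ba_via_starred_Psis}. I do not anticipate any substantive obstacle; the argument is purely a bookkeeping translation via Proposition~\ref{prop:ba_via_Psis}, and the only point that truly requires checking is the invariance of successive minima of $D_{\pmb\tau}\cB_\infty^d$ under simultaneous coordinate permutation of the lattice and of the diagonal entries of $D_{\pmb\tau}$, which is immediate from the symmetry of $\cB_\infty^d$.
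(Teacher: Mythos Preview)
Your proposal is correct and follows exactly the route the paper takes: the paper does not give a separate proof but states in the paragraph immediately preceding the proposition that any result proved for arbitrary $\Theta$ carries over upon replacing $\Theta$ by $\tr\Theta$, swapping $m\leftrightarrow n$, and passing via the coordinate permutation to $(\La^\ast,\gT^\ast)$. You have simply spelled out this bookkeeping in detail, including the verification that successive minima are preserved under the simultaneous permutation of lattice and diagonal, which the paper leaves implicit.
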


  Further, same as \eqref{eq:psis_via_parallelepipeds}, we get
  \begin{equation} \label{eq:starred_psis_via_parallelepipeds}
    \bpsi_p^\ast=\liminf_{t\to+\infty}\frac{\ln(\lambda_p^\ast(P_{m/n}(t^{-n/m})))}{\ln t}\,,\qquad\apsi_p^\ast=\limsup_{t\to+\infty}\frac{\ln(\lambda_p^\ast(P_{m/n}(t^{-n/m})))}{\ln t}\,,
  \end{equation}
  where $\lambda_p^\ast$ denotes the $p$-th minimum with respect to $\La^\ast$.

  Let us show that $\bpsi_p^\ast$, $\apsi_p^\ast$ are closely connected with $\bpsi_{d-p}$, $\apsi_{d-p}$ (which, as before, are related to $\La$ and the path $\gT$ defined by \eqref{eq:path}). It follows from the definition of $P_\gamma(t)$ that there is a positive constant $c$ depending only on $\Theta$, such that
  \[ c^{-1}P_\gamma(t^{-1})\subseteq P_\gamma(t)^\ast\subseteq cP_\gamma(t^{-1}), \]
  where $P_\gamma(t)^\ast$ is the polar reciprocal body for $P_\gamma(t)$. Furthermore, it follows from Mahler's theory that
  \[ \lambda_p^\ast(P_\gamma(t)^\ast)\lambda_{d+1-p}(P_\gamma(t))\asymp1 \]
  with the implied constants depending only on $d$. Hence
  \begin{equation} \label{eq:mahler_with_no_ast}
    \lambda_p^\ast(P_\gamma(t^{-1}))\lambda_{d+1-p}(P_\gamma(t))\asymp1
  \end{equation}
  Combining \eqref{eq:starred_psis_via_parallelepipeds}, \eqref{eq:mahler_with_no_ast} and \eqref{eq:psis_via_parallelepipeds} with $p$ substituted by $d+1-p$ we get

  \begin{proposition} \label{prop:starred_psis_via_psis}
    We have
    \[ \bpsi_p^\ast=-\dfrac nm\apsi_{d+1-p}\quad\text{ and }\quad\apsi_p^\ast=-\dfrac nm\bpsi_{d+1-p}\,. \]
  \end{proposition}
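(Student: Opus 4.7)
My approach is to use the three relations already established in this section: the expression \eqref{eq:starred_psis_via_parallelepipeds} for $\bpsi_p^\ast,\apsi_p^\ast$, the expression \eqref{eq:psis_via_parallelepipeds} for $\bpsi_{d+1-p},\apsi_{d+1-p}$, and the Mahler duality \eqref{eq:mahler_with_no_ast} linking $\lambda_p^\ast(P_{m/n}(\,\cdot\,))$ with $\lambda_{d+1-p}(P_{m/n}(\,\cdot\,))$. The entire argument consists of a single substitution in Mahler's relation, followed by a logarithmic rescaling and an interchange of $\liminf$ and $\limsup$ via a sign flip.

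First I would apply \eqref{eq:mahler_with_no_ast} with its free variable replaced by $t^{n/m}$, so that the argument of $\lambda_p^\ast$ becomes $t^{-n/m}$, matching \eqref{eq:starred_psis_via_parallelepipeds}. This produces
\[ \lambda_p^\ast(P_{m/n}(t^{-n/m}))\cdot\lambda_{d+1-p}(P_{m/n}(t^{n/m}))\asymp 1. \]
Taking logarithms yields
\[ \ln\lambda_p^\ast(P_{m/n}(t^{-n/m}))=-\ln\lambda_{d+1-p}(P_{m/n}(t^{n/m}))+O(1). \]

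Next I would divide by $\ln t$ and change variables to $t'=t^{n/m}$, for which $\ln t=\frac{m}{n}\ln t'$. The right-hand side becomes
\[ \frac{\ln\lambda_p^\ast(P_{m/n}(t^{-n/m}))}{\ln t}=-\frac{n}{m}\cdot\frac{\ln\lambda_{d+1-p}(P_{m/n}(t'))}{\ln t'}+O((\ln t)^{-1}). \]
Since $t\to+\infty$ if and only if $t'\to+\infty$, taking the $\liminf$ of both sides and using $\liminf(-f)=-\limsup f$, together with \eqref{eq:starred_psis_via_parallelepipeds} on the left and \eqref{eq:psis_via_parallelepipeds} on the right, gives $\bpsi_p^\ast=-\frac{n}{m}\apsi_{d+1-p}$. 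Taking the $\limsup$ analogously yields $\apsi_p^\ast=-\frac{n}{m}\bpsi_{d+1-p}$.

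The only point requiring attention is sign bookkeeping: the minus sign produced by Mahler duality exchanges $\liminf$ with $-\limsup$ and $\limsup$ with $-\liminf$, which is precisely what forces the swap between $\bpsi$ and $\apsi$ on the two sides of the claimed identities. No substantive obstacle arises beyond this careful tracking; all the heavy lifting has already been done in \eqref{eq:psis_via_parallelepipeds}, \eqref{eq:starred_psis_via_parallelepipeds} and \eqref{eq:mahler_with_no_ast}.
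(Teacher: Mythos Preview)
Your proof is correct and follows exactly the route the paper indicates: the paper's argument is simply the sentence ``Combining \eqref{eq:starred_psis_via_parallelepipeds}, \eqref{eq:mahler_with_no_ast} and \eqref{eq:psis_via_parallelepipeds} with $p$ substituted by $d+1-p$ we get\ldots'', and your writeup is a careful unpacking of that combination, including the substitution $t\mapsto t^{n/m}$ and the $\liminf/\limsup$ swap forced by the minus sign.
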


  \begin{corollary} \label{cor:starred_belpha_via_psis}
    We have
    \[ (1+\beta_p^\ast)(m-n\apsi_{d+1-p})=(1+\alpha_p^\ast)(m-n\bpsi_{d+1-p})=d. \]
  \end{corollary}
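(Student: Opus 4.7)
The plan is to derive the corollary as a direct algebraic consequence of Proposition \ref{prop:starred_belpha_via_starred_psis} and Proposition \ref{prop:starred_psis_via_psis}, which together already express both sides of the desired identity.

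First I would rewrite the factors $1+\bpsi_p^\ast$ and $1+\apsi_p^\ast$ appearing in \eqref{eq:starred_belpha_via_starred_psis} by substituting the two relations
\[ \bpsi_p^\ast=-\tfrac{n}{m}\apsi_{d+1-p},\qquad \apsi_p^\ast=-\tfrac{n}{m}\bpsi_{d+1-p} \]
supplied by Proposition \ref{prop:starred_psis_via_psis}. This turns $1+\bpsi_p^\ast$ into $\tfrac{1}{m}(m-n\apsi_{d+1-p})$ and $1+\apsi_p^\ast$ into $\tfrac{1}{m}(m-n\bpsi_{d+1-p})$.

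Next I would plug these into the equalities $(1+\beta_p^\ast)(1+\bpsi_p^\ast)=d/m$ and $(1+\alpha_p^\ast)(1+\apsi_p^\ast)=d/m$ of Proposition \ref{prop:starred_belpha_via_starred_psis}, and clear the common factor $1/m$ on both sides. What remains is exactly
\[ (1+\beta_p^\ast)(m-n\apsi_{d+1-p})=(1+\alpha_p^\ast)(m-n\bpsi_{d+1-p})=d, \]
as required.

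There is essentially no obstacle here: the corollary is purely a repackaging of two previously established identities, and the only care needed is to keep track of the swap between $\bpsi$ and $\apsi$ (the lower exponent of the transposed system corresponds to the upper exponent of the original one, and vice versa) that is built into Proposition \ref{prop:starred_psis_via_psis}.
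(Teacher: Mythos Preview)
Your proof is correct and follows exactly the paper's approach: the paper's own proof is the single line ``Follows from Propositions \ref{prop:starred_belpha_via_starred_psis} and \ref{prop:starred_psis_via_psis},'' and you have simply spelled out the substitution and the clearing of the factor $1/m$.
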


  \begin{proof}
    Follows from Propositions \ref{prop:starred_belpha_via_starred_psis} and \ref{prop:starred_psis_via_psis}.
  \end{proof}

  \begin{corollary} \label{cor:starred_times_nonstarred_equals_one}
    We have
    \[ \alpha_{d+1-p}\beta_p^\ast=1\quad\text{ and }\quad\alpha_{d+1-p}^\ast\beta_p=1. \]
  \end{corollary}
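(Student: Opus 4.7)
The plan is to derive the identity $\alpha_{d+1-p}\beta_p^\ast=1$ by a direct algebraic manipulation combining the two relations already available: Proposition \ref{prop:belpha_via_psis} applied with index $d+1-p$ (which relates $\alpha_{d+1-p}$ to $\apsi_{d+1-p}$), and Corollary \ref{cor:starred_belpha_via_psis} (which relates $\beta_p^\ast$ to $\apsi_{d+1-p}$). The second identity will then follow by symmetry, applying the same argument to the transposed system.

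More concretely, from Proposition \ref{prop:belpha_via_psis} with $p$ replaced by $d+1-p$, I solve for $\apsi_{d+1-p}$:
\[
1+\apsi_{d+1-p}=\frac{d}{n(1+\alpha_{d+1-p})},
\]
so that
\[
m-n\apsi_{d+1-p}=m-n+\frac{d}{1+\alpha_{d+1-p}}-n\alpha_{d+1-p}\cdot\frac{1}{1+\alpha_{d+1-p}}\cdot(\cdots)
\]
and a short simplification yields the clean expression
\[
m-n\apsi_{d+1-p}=\frac{d\,\alpha_{d+1-p}}{1+\alpha_{d+1-p}}.
\]
Substituting this into Corollary \ref{cor:starred_belpha_via_psis} gives
\[
(1+\beta_p^\ast)\cdot\frac{d\,\alpha_{d+1-p}}{1+\alpha_{d+1-p}}=d,
\]
which rearranges to $\beta_p^\ast\,\alpha_{d+1-p}=1$, as desired.

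For the second equality $\alpha_{d+1-p}^\ast\beta_p=1$, I invoke the principle stated at the beginning of the present Section: any identity valid for an arbitrary $\Theta$ remains valid when $\Theta$ is replaced by $\tr\Theta$, accompanied by the interchange $m\leftrightarrow n$ and the corresponding starring of the exponents. Applying this symmetry to the equality $\alpha_{d+1-p}\beta_p^\ast=1$ immediately gives $\alpha_{d+1-p}^\ast\beta_p=1$.

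I do not anticipate any serious obstacle: the proof is purely a calculation, and the only step one must be careful about is the algebraic simplification leading to the expression for $m-n\apsi_{d+1-p}$, where the identity $m=d-n$ must be used at the right moment so that the numerator collapses to $d\,\alpha_{d+1-p}$.
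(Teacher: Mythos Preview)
Your approach is correct and is essentially the same as the paper's, which simply says the result follows from Proposition~\ref{prop:belpha_via_psis} and Corollary~\ref{cor:starred_belpha_via_psis}; you have merely supplied the algebraic details. One small remark: for the second identity you need not invoke the transposition symmetry, since it also follows directly from the \emph{other} halves of those two results (the ones involving $\bpsi_{d+1-p}$ rather than $\apsi_{d+1-p}$) by the very same computation, which is presumably what the paper intends.
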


  \begin{proof}
    Follows from Proposition \ref{prop:belpha_via_psis} and Corollary \ref{cor:starred_belpha_via_psis}.
  \end{proof}


  In order to obtain the corresponding relations between the exponents of the second type, let us go in the opposite direction and prove

  \begin{proposition} \label{prop:starred_equals_nonstarred}
    We have
    \[ \gb_p=\gb_{d-p}^\ast\quad\text{ and }\quad\ga_p=\ga_{d-p}^\ast. \]
  \end{proposition}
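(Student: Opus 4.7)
The plan is to extract the equality $\gb_p=\gb_{d-p}^\ast$ (together with its uniform counterpart) from Propositions \ref{prop:ba_via_Psis}, \ref{prop:starred_ba_via_starred_Psis} and \ref{prop:Psi_p_is_Psi_1} by matching Schmidt exponents through Hodge duality. Writing $\varkappa=\min(p,\frac{m}{n}(d-p))$ and applying Proposition \ref{prop:starred_ba_via_starred_Psis} with $p$ replaced by $d-p$ gives the constant $\min(d-p,\frac{n}{m}p)=\frac{n}{m}\varkappa$. Dividing $(1+\gb_p)(\varkappa+\bPsi_p)=d/n$ by $(1+\gb_{d-p}^\ast)(\frac{n}{m}\varkappa+\bPsi^\ast_{d-p})=d/m$ reduces the claim to the two identities
\[ \bPsi_p(\La,\gT)=\frac{m}{n}\bPsi^\ast_{d-p}(\La^\ast,\gT^\ast),\qquad \aPsi_p(\La,\gT)=\frac{m}{n}\aPsi^\ast_{d-p}(\La^\ast,\gT^\ast). \]

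By Proposition \ref{prop:Psi_p_is_Psi_1} these amount to $\bpsi_1(\wedge^p\La,\widehat\gT)=\frac{n}{m}\bpsi_1(\wedge^{d-p}\La^\ast,\widehat{\gT^\ast})$ and its $\apsi_1$-analogue, where $\widehat\gT$ is the $p$-th compound path and $\widehat{\gT^\ast}$ the $(d-p)$-th compound path in $\R^r$, $r=\binom{d}{p}$. I would establish these via the Hodge star $\star\colon\wedge^{d-p}(\R^d)\to\wedge^p(\R^d)$: it acts on the standard basis as a signed permutation $\vec E_{\sigma'}\mapsto\pm\vec E_{\bar\sigma'}$, so is an isometry for the sup-norm. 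The functoriality identity $\star\circ A^{(d-p)}=(A^{-T})^{(p)}\circ\star$, valid whenever $\det A=1$ and easily deduced from $(A^{(p)}\omega)\wedge(A^{(d-p)}\eta)=\omega\wedge\eta$, applied to $A=\tr T$ (so that $A^{-T}=T^{-1}$) yields $\star(\wedge^{d-p}\La^\ast)=\wedge^p\La$.

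A direct computation using \eqref{eq:path}, \eqref{eq:path_ast} and the vanishing $\sum_i\tau^\ast_i(s)=0$ shows that whenever $\sigma'=\bar\sigma$,
\[ \widehat{\pmb\tau^\ast}(s)_{\sigma'}=-\sum_{i\in\sigma}\tau^\ast_i(s)=\frac{n}{m}\widehat{\pmb\tau}(s)_\sigma. \]
Consequently $\star$ carries $D_{\widehat{\pmb\tau^\ast}(s)}\cB_\infty^r$ to $D_{\widehat{\pmb\tau}(ns/m)}\cB_\infty^r$ and, combined with the lattice identification above, matches first minima
\[ \lambda_1\Big(D_{\widehat{\pmb\tau^\ast}(s)}\cB_\infty^r,\wedge^{d-p}\La^\ast\Big)=\lambda_1\Big(D_{\widehat{\pmb\tau}(ns/m)}\cB_\infty^r,\wedge^p\La\Big). \]
Taking logarithms, dividing by $s$, reparameterizing $u=ns/m$, and passing to $\liminf$ and $\limsup$ yields the desired identities, whence the proposition.

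The main obstacle I anticipate is the bookkeeping: checking the compound-matrix identity so that the signs in $\star(\wedge^{d-p}\La^\ast)=\wedge^p\La$ work out, and confirming that the complementation bijection $\sigma\leftrightarrow\bar\sigma$ produces precisely the factor $n/m$ (and not its reciprocal or a sign change) between the two path vectors.
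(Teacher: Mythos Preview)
Your argument is correct, but it takes a genuinely different route from the paper. The paper proves Proposition~\ref{prop:starred_equals_nonstarred} \emph{directly at the level of the defining inequalities}: using that the bases $\pmb\ell_1,\ldots,\pmb\ell_m,\vec e_{m+1},\ldots,\vec e_d$ and $\vec e_1,\ldots,\vec e_m,\pmb\ell_{m+1},\ldots,\pmb\ell_d$ are dual, one gets $\ast(\vec L_\sigma\wedge\vec E_{\sigma'})=\pm\vec E_{\overline\sigma}\wedge\vec L_{\overline\sigma'}$, hence $|\vec L_\sigma\wedge\vec E_{\sigma'}\wedge\vec Z|=|\vec L_{\overline\sigma'}\wedge\vec E_{\overline\sigma}\wedge\ast\vec Z|$, so the system \eqref{eq:ba_modified} for $\vec Z\in\wedge^p(\Z^d)$ is identical to the transposed system for $\ast\vec Z\in\wedge^{d-p}(\Z^d)$. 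No passage through Schmidt's exponents is needed; in fact the paper uses Proposition~\ref{prop:starred_equals_nonstarred} to \emph{derive} Corollary~\ref{cor:starred_equals_nonstarred} (your identity $\bPsi_p=\frac{m}{n}\bPsi_{d-p}^\ast$), reversing your logical order.

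Your approach instead proves $\bPsi_p=\frac{m}{n}\bPsi_{d-p}^\ast$ first, via the lattice identification $\star(\wedge^{d-p}\La^\ast)=\wedge^p\La$ (from the compound identity $\star\circ A^{(d-p)}=(A^{-\intercal})^{(p)}\circ\star$) together with the path computation $\widehat{\tau^\ast}_{\bar\sigma}(s)=\frac{n}{m}\widehat\tau_\sigma(s)$, and then feeds this into Propositions~\ref{prop:ba_via_Psis} and~\ref{prop:starred_ba_via_starred_Psis}. This is longer and leans on the machinery of Sections~4--6, but it has the conceptual payoff of showing that Proposition~\ref{prop:starred_equals_nonstarred} and Corollary~\ref{cor:starred_equals_nonstarred} are equivalent manifestations of a single Hodge-duality identity between $\wedge^p\La$ and $\wedge^{d-p}\La^\ast$. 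One small slip: the line ``these amount to $\bpsi_1(\wedge^p\La,\widehat\gT)=\frac{n}{m}\bpsi_1(\wedge^{d-p}\La^\ast,\widehat{\gT^\ast})$'' has the factor inverted (it should be $\frac{m}{n}$), but your subsequent reparametrisation $u=ns/m$ gives the correct factor, so the argument goes through.
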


  \begin{proof}
    Let $\vec L_\sigma$, $\vec E_\sigma$, $\cJ_k$, $\cJ'_k$ be as in Section \ref{sec:laurexp}.

    We remind that the bases $\pmb\ell_1,\ldots,\pmb\ell_m,\vec e_{m+1},\ldots,\vec e_d$ and $\vec e_1,\ldots,\vec e_m,\pmb\ell_{m+1},\ldots,\pmb\ell_d$ are dual. So, if $\sigma\in\cJ_k$, $\sigma'\in\cJ'_{k'}$, then
    \[ \ast(\vec L_\sigma\wedge\vec E_{\sigma'})=\pm\vec E_{\overline\sigma}\wedge\vec L_{\overline\sigma'}, \]
    where $\ast$ denotes the Hodge star operator,
    \[ \overline\sigma=\{1,\ldots,m\}\backslash\sigma,\qquad\overline\sigma'=\{m+1,\ldots,d\}\backslash\sigma', \]
    and the sign depends on the parity of the corresponding permutation. Hence for any $\sigma\in\cJ_k$, $\sigma'\in\cJ'_{d-p-k}$, and any $\vec Z\in\wedge^p(\Z^d)$ we have
    \[ |\vec L_\sigma\wedge\vec E_{\sigma'}\wedge\vec Z|=|\vec E_{\overline\sigma}\wedge\vec L_{\overline\sigma'}\wedge\ast\vec Z|. \]
    Thus,
    \begin{equation} \label{eq:max_equals_hodged_max}
      \max_{\begin{subarray}{c} \sigma\in\cJ_k \\ \sigma'\in\cJ'_{d-p-k} \end{subarray}}
      |\vec L_\sigma\wedge\vec E_{\sigma'}\wedge\vec Z|=
      \max_{\begin{subarray}{c} \sigma'\in\cJ'_{p-m+k} \\ \sigma\in\cJ_{m-k} \end{subarray}}
      |\vec L_{\sigma'}\wedge\vec E_\sigma\wedge\ast\vec Z|,
    \end{equation}
    for each $\vec Z\in\wedge^p(\Z^d)$.

    Set $k_0^\ast=\max(0,n-(d-p))$, $k_1^\ast=\min(n,p)$. Then $k_0^\ast=k_0+p-m$, $k_1^\ast=k_1+p-m$, and the inequality $k_0\leq k\leq k_1$ is equivalent to $k_0^\ast\leq p-m+k\leq k_1^\ast$. Therefore, it follows from \eqref{eq:max_equals_hodged_max} that \eqref{eq:ba_modified} is equivalent to
    \begin{equation} \label{eq:ba_hodged}
      \max_{\begin{subarray}{c} \sigma'\in\cJ'_k \\ \sigma\in\cJ_{p-k} \end{subarray}}
      |\vec L_{\sigma'}\wedge\vec E_\sigma\wedge\ast\vec Z|\leq t^{1-(k-k_0^\ast)(1+\gamma)},\qquad k=k_0^\ast,\ldots,k_1^\ast.
    \end{equation}

    It remains to apply Proposition \ref{prop:ba_substitution} and the fact that $\ast(\wedge^p(\Z^d))=\wedge^{d-p}(\Z^d)$.
  \end{proof}

  \begin{corollary} \label{cor:starred_ba_via_Psis}
    Set $\varkappa^{\ast\ast}=\min(d-p,\frac mnp)=\frac mn\varkappa^\ast$. Then
    \[ (1+\gb_p^\ast)(\varkappa^{\ast\ast}+\bPsi_{d-p})=(1+\ga_p^\ast)(\varkappa^{\ast\ast}+\aPsi_{d-p})=d/n. \]
  \end{corollary}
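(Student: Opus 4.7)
The corollary is essentially a direct consequence of combining two previously established statements, so the plan is to reduce everything to objects whose transference relations are already known, rather than redoing any lattice or Hodge-duality argument.

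First I would invoke Proposition \ref{prop:starred_equals_nonstarred} to replace the starred second-type exponents by non-starred ones: substituting $p$ for $d-p$ in the identities $\gb_p=\gb_{d-p}^\ast$ and $\ga_p=\ga_{d-p}^\ast$ yields
\[ \gb_p^\ast=\gb_{d-p},\qquad\ga_p^\ast=\ga_{d-p}. \]
This is the key move: it converts the problem into one about $\Theta$ itself (with parameter $d-p$), which is precisely the setting of Proposition \ref{prop:ba_via_Psis}.

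Next I would apply Proposition \ref{prop:ba_via_Psis} with $p$ replaced by $d-p$. The quantity $\varkappa$ in that proposition becomes
\[ \min\!\left(d-p,\,\tfrac{m}{n}(d-(d-p))\right)=\min\!\left(d-p,\,\tfrac{m}{n}p\right), \]
which is exactly the $\varkappa^{\ast\ast}$ defined in the corollary. The proposition then gives
\[ (1+\gb_{d-p})(\varkappa^{\ast\ast}+\bPsi_{d-p})=(1+\ga_{d-p})(\varkappa^{\ast\ast}+\aPsi_{d-p})=d/n. \]
Substituting the identities from the first step completes the proof.

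There is essentially no obstacle here: the only small bookkeeping item is verifying that $\varkappa^{\ast\ast}=\tfrac{m}{n}\varkappa^\ast$, which is immediate from $\varkappa^\ast=\min(p,\tfrac{n}{m}(d-p))$ by factoring out $m/n$. Note in particular that the value $d/n$ on the right (rather than $d/m$ as in Proposition \ref{prop:starred_ba_via_starred_Psis}) is the natural one, since after the duality reduction we are working with $\Theta$ and its lattice $\La$, not with $\tr\Theta$ and $\La^\ast$; this is a useful sanity check on the statement.
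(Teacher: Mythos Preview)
Your proof is correct and follows exactly the approach of the paper, which simply states that the corollary follows from Propositions \ref{prop:ba_via_Psis} and \ref{prop:starred_equals_nonstarred}. You have faithfully spelled out the substitution $p\mapsto d-p$ and the identification of $\varkappa^{\ast\ast}$ that this one-line citation entails.
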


  \begin{proof}
    Follows from Propositions \ref{prop:ba_via_Psis} and \ref{prop:starred_equals_nonstarred}.
  \end{proof}

  \begin{corollary} \label{cor:starred_equals_nonstarred}
    We have
    \[ \bPsi_p^\ast=\dfrac nm\bPsi_{d-p}\quad\text{ and }\quad\aPsi_p^\ast=\dfrac nm\aPsi_{d-p}\,. \]
  \end{corollary}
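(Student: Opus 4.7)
The plan is to obtain this corollary purely algebraically, by eliminating $\gb_p^\ast$ (resp. $\ga_p^\ast$) between the two formulas that the preceding results provide for these exponents: Proposition \ref{prop:starred_ba_via_starred_Psis} expresses them in terms of $\bPsi_p^\ast$, $\aPsi_p^\ast$, while Corollary \ref{cor:starred_ba_via_Psis} expresses the very same exponents in terms of $\bPsi_{d-p}$, $\aPsi_{d-p}$.

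Concretely, from Proposition \ref{prop:starred_ba_via_starred_Psis} I have $(1+\gb_p^\ast)(\varkappa^\ast+\bPsi_p^\ast)=d/m$, and from Corollary \ref{cor:starred_ba_via_Psis} I have $(1+\gb_p^\ast)(\varkappa^{\ast\ast}+\bPsi_{d-p})=d/n$. Dividing the second equality by the first eliminates $1+\gb_p^\ast$ and yields
\[ \frac{\varkappa^{\ast\ast}+\bPsi_{d-p}}{\varkappa^\ast+\bPsi_p^\ast}=\frac{m}{n}, \]
i.e.\ $n(\varkappa^{\ast\ast}+\bPsi_{d-p})=m(\varkappa^\ast+\bPsi_p^\ast)$. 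The identity $\varkappa^{\ast\ast}=\frac{m}{n}\varkappa^\ast$, which is recorded as part of Corollary \ref{cor:starred_ba_via_Psis}, makes the constant terms cancel, leaving $n\bPsi_{d-p}=m\bPsi_p^\ast$, which rearranges to $\bPsi_p^\ast=\tfrac{n}{m}\bPsi_{d-p}$.

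Exactly the same manipulation, applied to the $\ga_p^\ast$--halves of Proposition \ref{prop:starred_ba_via_starred_Psis} and Corollary \ref{cor:starred_ba_via_Psis}, produces $\aPsi_p^\ast=\tfrac{n}{m}\aPsi_{d-p}$. There is no real obstacle in the argument; the only point worth checking is that the $\varkappa^{\ast\ast}=\tfrac{m}{n}\varkappa^\ast$ relation holds in both regimes of the minima defining $\varkappa^\ast=\min(p,\tfrac{n}{m}(d-p))$ and $\varkappa^{\ast\ast}=\min(d-p,\tfrac{m}{n}p)$, which is immediate by multiplying through by $m/n$.
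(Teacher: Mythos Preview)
Your argument is correct and follows exactly the route indicated in the paper: the paper's own proof simply says ``Follows from Proposition \ref{prop:starred_ba_via_starred_Psis} and Corollary \ref{cor:starred_ba_via_Psis}'', and you have spelled out precisely the elimination of $1+\gb_p^\ast$ (resp.\ $1+\ga_p^\ast$) between those two identities, together with the cancellation coming from $\varkappa^{\ast\ast}=\tfrac{m}{n}\varkappa^\ast$.
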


  \begin{proof}
    Follows from Proposition \ref{prop:starred_ba_via_starred_Psis} and Corollary \ref{cor:starred_ba_via_Psis}.
  \end{proof}

  \section{Transference inequalities}

  For $p=1$ we have $\beta_1=\gb_1$, $\alpha_1=\ga_1$, which was shown in \cite{bugeaud_laurent_up_down}, or which can also be seen from our Propositions \ref{prop:belpha_via_psis}, \ref{prop:ba_via_Psis} and the obvious fact that $\bpsi_1=\bPsi_1$ and $\apsi_1=\aPsi_1$.

  In \cite{khintchine_palermo} A.~Khintchine proved for $m=1$ his famous transference inequalities
  \begin{equation} \label{eq:khintchine_transference}
    \gb_1^\ast\geq n\gb_1+n-1,\qquad
    \gb_1\geq\frac{\gb_1^\ast}{(n-1)\gb_1^\ast+n}\,.
  \end{equation}
  As we mentioned in the Introduction, M.~Laurent and Y.~Bugeaud in their paper \cite{bugeaud_laurent_up_down} split \eqref{eq:khintchine_transference} into a chain of inequalities for intermediate exponents. They proved that for $m=1$ and every $p=1,\ldots,n-1$
  \begin{equation} \label{eq:khintchine_transference_split}
    \gb_{p+1}\geq\frac{(n-p+1)\gb_{p}+1}{n-p}\,,\qquad
    \gb_{p}\geq\frac{p\gb_{p+1}}{\gb_{p+1}+p+1}\,.
  \end{equation}
  By Proposition \ref{prop:starred_equals_nonstarred} we have $\gb_1^\ast=\gb_{d-1}$. Therefore, \eqref{eq:khintchine_transference} can be easily obtained by iterating \eqref{eq:khintchine_transference_split}.

  In \cite{dyson} F.~Dyson generalized \eqref{eq:khintchine_transference} to the case of arbitrary $n$, $m$ by proving that
  \begin{equation} \label{eq:dyson_transference}
    \gb_1^\ast\geq\frac{n\gb_1+n-1}{(m-1)\gb_1+m}\,.
  \end{equation}
  It is interesting to rewrite \eqref{eq:dyson_transference} in terms of Schmidt's exponents. By Propositions \ref{prop:starred_equals_nonstarred} and \ref{prop:ba_via_Psis} it becomes simply
  \begin{equation} \label{eq:Psi_very_dyson}
    (d-1)\bPsi_{d-1}\leq\bPsi_1,
  \end{equation}
  which coincides with the first statement of Corollary \ref{cor:Psi_dyson}. But we already have an intermediate variant of this inequality! It is
  \begin{equation} \label{eq:Psi_inter_very_dyson}
    \frac{\bPsi_{p+1}}{d-p-1}\leq\frac{\bPsi_p}{d-p}\,,
  \end{equation}
  the first statement of Corollary \ref{cor:Psi_inter_dyson}. Rewriting it in terms of Diophantine exponents we get
  
  \begin{theorem} \label{t:inter_dyson}
    For each $p=1,\ldots,d-2$ the following statements hold.
    
    If $p\geq m$, then
    \begin{equation} \label{eq:inter_dyson_p_geq}
      (d-p-1)(1+\gb_{p+1})\geq(d-p)(1+\gb_p).
    \end{equation}
    
    If $p\leq m-1$, then
    \begin{equation} \label{eq:inter_dyson_p_leq}
      (d-p-1)(1+\gb_p)^{-1}\geq(d-p)(1+\gb_{p+1})^{-1}-n.
    \end{equation}
  \end{theorem}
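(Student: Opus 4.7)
The plan is to deduce the theorem from Corollary \ref{cor:Psi_inter_dyson} by translating Schmidt's exponents of the second type back to Diophantine ones via Proposition \ref{prop:ba_via_Psis}. That Proposition gives
\[
\bPsi_p=\frac{d/n}{1+\gb_p}-\varkappa_p,\qquad \varkappa_p=\min\bigl(p,\tfrac{m}{n}(d-p)\bigr),
\]
so the whole argument reduces to substituting this expression into the inequality $\frac{\bPsi_{p+1}}{d-p-1}\leq\frac{\bPsi_p}{d-p}$ of Corollary \ref{cor:Psi_inter_dyson} and simplifying. This is exactly why the theorem splits into two cases: the minimum defining $\varkappa_p$ switches branches.

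First I would settle which branch is active. A short calculation shows that $p\leq\tfrac{m}{n}(d-p)$ is equivalent to $p\leq m$. Hence if $p\geq m$ in the statement, then $p+1>m$ as well, and $\varkappa_p=\tfrac{m(d-p)}{n}$, $\varkappa_{p+1}=\tfrac{m(d-p-1)}{n}$; whereas if $p\leq m-1$, then $p+1\leq m$ and $\varkappa_p=p$, $\varkappa_{p+1}=p+1$. The two ranges must be handled separately.

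In the first regime, dividing $\bPsi_p$ by $d-p$ and $\bPsi_{p+1}$ by $d-p-1$ contributes on each side the same constant term $-\tfrac{m}{n}$, which cancels and reduces \eqref{eq:Psi_inter_very_dyson} to
\[
\frac{d/n}{(d-p-1)(1+\gb_{p+1})}\leq\frac{d/n}{(d-p)(1+\gb_p)},
\]
equivalent to \eqref{eq:inter_dyson_p_geq}. In the second regime the substitution leaves a constant $(d-p)(p+1)-(d-p-1)p=d$ on the right, which together with the $d/n$ factor collapses to a single additive term $n$, yielding $\frac{d-p}{1+\gb_{p+1}}-\frac{d-p-1}{1+\gb_p}\leq n$, which is \eqref{eq:inter_dyson_p_leq} after rearrangement.

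The main obstacle is really just careful bookkeeping: keeping the two cases separate, tracking signs, and checking that in each regime the elementary algebra collapses to precisely the form stated. All of the geometric content has been fully absorbed into Corollary \ref{cor:Psi_inter_dyson} and Proposition \ref{prop:ba_via_Psis}, so no further ingredient is needed.
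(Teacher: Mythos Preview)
Your proposal is correct and follows exactly the route the paper takes: the theorem is obtained by substituting Proposition~\ref{prop:ba_via_Psis} into the inequality of Corollary~\ref{cor:Psi_inter_dyson}, with the two cases arising precisely from which branch of $\varkappa_p=\min(p,\tfrac{m}{n}(d-p))$ is active. Your case analysis and the algebra in both regimes check out.
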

  
  If $m=1$, then $p\geq m$ and \eqref{eq:inter_dyson_p_geq} gives the first inequality of \eqref{eq:khintchine_transference_split}. If $n=1$, then $p+1\leq m$ and \eqref{eq:inter_dyson_p_leq} in view of Proposition \ref{prop:starred_equals_nonstarred} gives the second inequality of \eqref{eq:khintchine_transference_split}.
  
  As we see, the description of the discussed phenomenon in terms of Schmidt's exponents given by \eqref{eq:Psi_inter_very_dyson} is much more elegant. Its another attraction is its universality for all values of $n$, $m$ whose sum is equal to $d$. Moreover, the second statements of Corollaries \ref{cor:Psi_inter_dyson}, \ref{cor:Psi_dyson} are the analogues of \eqref{eq:Psi_inter_very_dyson} and \eqref{eq:Psi_very_dyson} for the upper Schmidt's exponents, so rewriting them with the help of Proposition \ref{prop:ba_via_Psis} gives us the analogue of Theorem \ref{t:inter_dyson} for the uniform Diophantine exponents splitting the inequality
  \begin{equation} \label{eq:apfel_transference}
    \ga_1^\ast\geq\frac{n\ga_1+n-1}{(m-1)\ga_1+m}
  \end{equation}
  proved by A.~Apfelbeck in \cite{apfelbeck} into a chain of inequalities for intermediate exponents:

  \begin{theorem} \label{t:inter_apfel}
    For each $p=1,\ldots,d-2$ the following statements hold.

    If $p\geq m$, then
    \begin{equation} \label{eq:inter_apfel_p_geq}
      (d-p-1)(1+\ga_{p+1})\geq(d-p)(1+\ga_p).
    \end{equation}

    If $p\leq m-1$, then
    \begin{equation} \label{eq:inter_apfel_p_leq}
      (d-p-1)(1+\ga_p)^{-1}\geq(d-p)(1+\ga_{p+1})^{-1}-n.
    \end{equation}
  \end{theorem}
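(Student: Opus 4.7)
The plan is to mirror the translation that yields Theorem \ref{t:inter_dyson} but starting from the second, rather than the first, inequality of Corollary \ref{cor:Psi_inter_dyson}. Namely, I would apply the Schmidt-side inequality
\[ \frac{\aPsi_{p+1}}{d-p-1}\leq\frac{\aPsi_p}{d-p} \]
and invoke Proposition \ref{prop:ba_via_Psis} to rewrite it in terms of $\ga_p$ and $\ga_{p+1}$. Since both sides of the Schmidt-side inequality involve $\limsup$s of the same sequences, and since the substitution from Proposition \ref{prop:ba_via_Psis} is valid uniformly for both types of exponents, the only genuinely new ingredient relative to Theorem \ref{t:inter_dyson} is a case analysis on the value of $\varkappa$.

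First I would multiply the Schmidt-side inequality by $(d-p)(d-p-1)>0$ to obtain $(d-p)\aPsi_{p+1}\leq(d-p-1)\aPsi_p$. Proposition \ref{prop:ba_via_Psis} gives $\aPsi_p=\tfrac{d}{n(1+\ga_p)}-\varkappa$ with $\varkappa=\min(p,\tfrac{m}{n}(d-p))$. The inequality $\tfrac{m}{n}(d-p)\leq p$ is equivalent to $m\leq p$, so $\varkappa=p$ when $p\leq m-1$ and $\varkappa=\tfrac{m}{n}(d-p)$ when $p\geq m$; the analogous bookkeeping for the index $p+1$ shows $\varkappa_{p+1}=p+1$ in the former regime (since $p+1\leq m$) and $\varkappa_{p+1}=\tfrac{m}{n}(d-p-1)$ in the latter (since $p+1>m$). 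This is the source of the case split in the statement.

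In the regime $p\geq m$, the two $m$-dependent contributions on either side of $(d-p)\aPsi_{p+1}\leq(d-p-1)\aPsi_p$ both equal $\tfrac{m(d-p)(d-p-1)}{n}$ and cancel, so the inequality collapses to $(d-p)/(1+\ga_{p+1})\leq(d-p-1)/(1+\ga_p)$, which is exactly \eqref{eq:inter_apfel_p_geq}. In the regime $p\leq m-1$, the analogous substitution together with the algebraic identity $(p+1)(d-p)-p(d-p-1)=d$ reduces the inequality to $(d-p)/(1+\ga_{p+1})-(d-p-1)/(1+\ga_p)\leq n$, which is exactly \eqref{eq:inter_apfel_p_leq}. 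The main obstacle I anticipate is purely bookkeeping: checking that the case split on $\varkappa$ lines up correctly between indices $p$ and $p+1$, and that the resulting algebra cleans up to the two forms given. No new Diophantine input beyond the second statement of Corollary \ref{cor:Psi_inter_dyson} and Proposition \ref{prop:ba_via_Psis} is required.
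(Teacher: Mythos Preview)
Your proposal is correct and follows exactly the approach the paper indicates: the paper derives Theorem \ref{t:inter_apfel} simply by rewriting the second inequality of Corollary \ref{cor:Psi_inter_dyson} via Proposition \ref{prop:ba_via_Psis}, and your case analysis on $\varkappa$ together with the algebraic simplifications (the cancellation of $\tfrac{m}{n}(d-p)(d-p-1)$ when $p\geq m$, and the identity $(p+1)(d-p)-p(d-p-1)=d$ when $p\leq m-1$) is precisely the computation this rewriting entails.
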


\vskip 10mm

\noindent
Oleg N. {\sc German} \\
Moscow Lomonosov State University \\
Vorobiovy Gory, GSP--1 \\
119991 Moscow, RUSSIA \\
\emph{E-mail}: {\fontfamily{cmtt}\selectfont german@mech.math.msu.su, german.oleg@gmail.com}

\end{document}